\definecolor{verdemar}{rgb}{0.75,0.78,0.65}
\newtheorem{proposicao}{Proposition}[subsection]
\newtheorem{teorema}{Theorem}[subsection]
\newtheorem{lema}{Lemma}[subsection]
\newtheorem{corolario}{Corollary}[subsection]
\newtheorem{obs}{Remark}[subsection]
\newtheorem{Def}{Definition}[subsection]
\newtheorem{definicao}{Definition}[subsection]
\newenvironment{proof}{{\noindent\bf Proof.} }
                       {\hfill\rule{2.1mm}{2.1mm} \bigskip }
\newcommand{\R}{\mathbb{R}}
\begin{document}
\title{A Scalarization Proximal Point Method for Quasiconvex  Multiobjective Minimization \footnote{
This research was conducted with partial financial support from CAPES, through the Interagency Doctoral Program New Frontiers UFRJ/UFT.}}
\author{H. C. F. Apolinario\thanks{Federal  University of Tocantins, Undergraduate Computation Sciences Course, ALC NO 14 (109 Norte) AV.NS.15 S/N , CEP 77001-090, Tel: +55 63 8481-5168; +55 63 3232-8027; FAX +55 63 3232-8020, Palmas, Brazil (hellenato@cos.ufrj.br).}
\and{E. A. Papa Quiroz\thanks{ Mayor de San Marcos National University, Department of Ciencias Matem\'{a}ticas,  Lima, Per\'{u}. Federal University of Rio de Janeiro, Computing and Systems Engineering Department, post office box  68511,CEP 21945-970, Rio de Janeiro, Brazil(erikpapa@gmail.com).  The research of the second author was supported by the Postdoctoral Scholarship CAPES-FAPERJ Edital PAPD-2011}}\\
\and{P. R. Oliveira\thanks{Federal University of Rio de Janeiro, Computing and Systems Engineering Department, post office box  68511,CEP 21945-970, Rio de Janeiro, Brazil(poliveir@cos.ufrj.br).}}}
\date{\today}
\maketitle
\ \\[-0.5cm]

\begin{center}
{\bf Abstract}
\end{center}

In this paper we propose a scalarization proximal point method to solve multiobjective unconstrained minimization problems with locally Lipschitz and quasiconvex vector functions. We prove, under natural assumptions, that the sequence  generated by the method is well defined   and converges globally to a Pareto-Clarke critical point. Our method may be seen as an extension, for the non convex case, of the inexact proximal method for multiobjective convex minimization problems studied by Bonnel et al. (SIAM Journal on Optimization 15, 4, 953-970, 2005).
\\\\
\noindent{\bf Keywords:} Multiobjective minimization, Clarke subdifferential, quasiconvex functions, pro-ximal point methods, Fejér convergence, Pareto-Clarke critical point.

\section{Introduction}
\noindent

In this work we consider the unconstrained multiobjective minimization problem:
\begin{eqnarray}
\textrm{min}\lbrace F(x): x \in \mathbb{R}^n\rbrace
\label{prob}
\end{eqnarray}
where $F: \mathbb{R}^n\longrightarrow \mathbb{R}^m$ is a locally Lipschitz and
quasiconvex vector function on the Euclidean space $ \mathbb{R}^n.$  A motivation to study this problem are the consumer demand theory in economy, where the quasiconvexity of the objective vector function is a natural condition associated to diversification of the consumption, see Mas Colell et al. \cite{Colell}, and the quasiconvex optimization models in location Theory, see \cite{Gromicho}. Another motivation are the extensions of well known methods in convex optimization to quasiconvex one, we mentioned the following works:
\begin{itemize}
\item Bello Cruz et al. \cite{Bello}, considered the projected gradient method for solving the problem of finding a Pareto optimum of a quasiconvex multiobjective function. They proved the convergence of the sequence
generated by the algorithm to a stationary point and  when the components of the multiobjective function are pseudoconvex, they obtained the convergence to a weak Pareto solution.
\item da Cruz Neto et al. \cite{daCruzMult},  extended  the classical subgradient method for real-valued mi-nimization to multiobjective optimization. Assuming the basically componentwise quasiconvexity of the objective components they obtained the  full convergence of the sequence to a Pareto solution.
\item Papa Quiroz and Oliveira \cite{erikPaulo0601, Papa2011, Papa2}, have been extended the convergence of the proximal point method for quasiconvex minimization problems on general riemannian manifolds wich includes the euclidean space. Furthermore, in \cite{Papa1} the authors extended the convergence of the proximal point method for the nonnegative orthant.
\item Kiwiel \cite{Kiwiel}, extended the convergence of the subgradient method to solve quasiconvex minimization problems in Hilbert spaces.
\item Brito et al. \cite{Brito},  proposed an interior proximal algorithm inspired by
the logarithmic-quadratic proximal method for linearly constrained quasiconvex minimization problems. For that method, they
proved the global convergence when the proximal parameters go to zero. The latter assumption could be dropped when the function is assumed to be pseudoconvex.
\item Langenberg and Tichatschke \cite{Langenberg} studied the proximal method when the objective func-tion is quasiconvex and the problem is constrained to an arbitrary closed convex set and the regularization is a Bregman distance. Assuming that the function is locally Lipschitz and using the Clarke subdifferential, the authors proved the global convergence of the method to a critical point.
\end{itemize}

In this paper we are interested in extending the convergence properties of the proximal point method to solve the quasiconvex multiobjective problem (\ref{prob}). The proximal point method, introduced by Martinet \cite{Martinet}, to solve the problem  $\min\lbrace f(x) : x \in \mathbb{R}^n\rbrace$ where $f$ is a escalar function, generates a sequence $\lbrace x^k\rbrace_{k \in \mathbb{N}}\subset \mathbb{R}^n$, from an iterative process starting with a point $x^0 \in \mathbb{R}^n$, arbitrary, and $x^{k+1} \in \textrm{argmin}\lbrace f(x) + \frac{\lambda_k}{2}\Vert x - x^k \Vert ^2: x \in \mathbb{R}^n\rbrace$, where $\lambda_k > 0,$ is a regularization parameter.
It is well known, see Guler \cite{Guler1}, that if $f$ is convex  and
$\{\lambda_k\}$ satisfies $
\sum\limits_{k=1}^{+\infty}(1/\lambda_k)=+\infty,
$
then
$\lim_{k\rightarrow \infty}f(x^k)=\inf \{ f(x):x\in  \mathbb{R}^n  \}.$  Furthermore, if the optimal set is nonempty, we obtain that $\{x^k\}$ converges to an optimal solution of the problem.

When $F$ is convex in (\ref{prob}),  Bonnel at al. \cite{Iusem} have been proved the convergence of the  proximal point method for a weak Pareto solution of the problem (\ref{prob})  in a general context, see also  Villacorta and Oliveira \cite{Villacorta}  using proximal distances and  Gregório and Oliveira \cite{Gregorio} using a logarithmic quadratic proximal scalarization method.

In this work we introduce a scalarization proximal point method to solve the quasiconvex multiobjective minimization problem (\ref{prob}). The iteration is the following: given $x^{k} \in \mathbb{R}^n$, find $x^{k+1}\in \Omega_k$ such that:
 $$
 0 \in \partial^o\left( \left\langle F(.), z_k\right\rangle  + \dfrac{\alpha_k}{2}\left\langle e_k , z_k\right\rangle \Vert\ .\  - x^k \Vert ^2 \right) (x^{k+1}) + \mathcal{N}_{\Omega_k}(x^{k+1})
$$
where ${\partial}^o$ is the Clarke subdifferential, $\Omega_k= \left\{ x\in \mathbb{R}^n: F(x) \preceq F(x^k)\right\}$, $\alpha_k > 0 $, $\left\{e_k\right\}\subset \mathbb{R}^m_{++}$, $\left\|e_k\right\| = 1$, $\left\{z_k\right\} \subset \mathbb{R}^m_+\backslash \left\{0\right\}$, $\left\|z_k\right\| = 1$ and $\mathcal{N}_{\Omega_k}(x^{k+1})$ the normal cone to $\Omega_k$ at $x^{k+1}$.

We prove the  well  definition of  the sequence generated by the method and we obtain the global convergence to a Pareto-Clarke critical point and when $F$ is convex we obtain the convergence to a weak Pareto solution of the problem.

The paper is organized as follows:  In  Section 2 we recall some concepts and results basic on multiobjective optimization, quasiconvex and convex functions, Fr\'echet, Limiting and Clarke  subdiferential,  descent direction and Fej\'{e}r convergence theory. In Section 3 we introduce our method and analyze the convergence of the iterations. In Section 4, we present some quasiconvex optimization models and in Section 5 we give our conclusion and some ideas for future researchers.


\section{Preliminaries}

In this section, we present some basic concepts and results that are of fundamental importance for the development of our work. These facts can be found, for example, in Hadjisavvas \cite{Had}, Mordukhovich \cite{Mordukhovich} and, Rockafellar and Wets \cite{Rockafellar}.
\subsection{Definitions, notations and some basic results}

Along this paper  $ \mathbb{R}^n$ denotes an euclidean space, that is, a real vectorial space with the canonical inner product $\langle x,y\rangle=\sum\limits_{i=1}^{n} x_iy_i$ and the norm given by $||x||=\sqrt{\langle x, x\rangle }$.\\
Given a function {\small $f :\mathbb{R}^n\longrightarrow \mathbb{R}\cup\left\{+\infty\right\}$}, we
denote by {\small dom $(f)= \left\{x \in \mathbb{R}^n: f(x) < + \infty \right\}$}, the {\it effective domain } of $f$.  If {\small dom $(f)\neq \emptyset$}, $f $ is called proper.  If {\footnotesize $\lim  \limits_{\left\|x\right\|\rightarrow +\infty}f(x) = +\infty$}, $f$ is called coercive.  We denote by arg min $\left\{f(x): x \in \mathbb{R}^n\right\}$ the set of minimizer of the function $f$ and by  $f * $, the optimal value of problem: $\min \left\{f(x): x \in \mathbb{R}^n\right\},$  if it exists.
The \  function \ $f$ is {\it lower semicontinuous} at $\bar{x}$ if for all sequence $\left\{x_k\right\}_{k \in \mathbb{N}} $ such that $\lim  \limits_{k \rightarrow +\infty}x_k = \bar{x}$ we obtain that $f(\bar{x}) \leq \liminf \limits_{k \rightarrow +\infty}f(x_k)$.\\

\begin{Def}
Let $f:\mathbb{R}^n\longrightarrow \mathbb{R}\cup\left\{+\infty\right\}$ be a proper function. We say that $f$ is locally Lipschitz at $x\in$ {\small dom $(f)$} if there exists $\varepsilon_x>0$ such that $$\vert f(z) - f(y)\vert \leq L_{x}\Vert z - y\Vert, \
\forall z, y \in B(x,\varepsilon_x)\cap {\small dom (f)},$$ where $B(x,\varepsilon_x) = \lbrace y \in \mathbb{R}^n: \Vert y - x\Vert < \varepsilon_{x}\rbrace$ and $L_x$ is some positive number. $f$ is locally Lipschitz on $\mathbb{R}^n$ if $f$ is locally Lipschitz for each $x\in$ {\small dom $(f)$}
\end{Def}

The  next result ensures that the set of minimizers of a function, under some assumptions, is nonempty.
\begin{proposicao}{\bf (Rockafellar and Wets \cite{Rockafellar}, Theorem 1.9)}\\
Suppose that {\small $f:\mathbb{R}^n\longrightarrow \mathbb{R}\cup\left\{+\infty\right\}$} is {\it proper, lower semicontinuous} and coercive, then the optimal value  $ f^*$\ is finite and the set $\textnormal{arg min}$ $\left\{f(x): x \in \mathbb{R}^n\right\}$ is nonempty and compact.
\label{coercivaesemicont}
\end{proposicao}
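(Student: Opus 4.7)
The plan is to run the classical direct method: construct a minimizing sequence, use coercivity to confine it to a bounded set, extract a convergent subsequence via Bolzano--Weierstrass, and pass to the limit using lower semicontinuity. Since $f$ is proper, there exists $x_0 \in \textnormal{dom}(f)$, so $f^* \leq f(x_0) < +\infty$. I would then choose $\{x_k\} \subset \mathbb{R}^n$ with $f(x_k) \to f^*$ and, after dropping finitely many terms, assume $f(x_k) \leq f(x_0)$ for every $k$.

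Next I would argue that $\{x_k\}$ is bounded. If not, some subsequence satisfies $\|x_{k_j}\| \to +\infty$, and coercivity forces $f(x_{k_j}) \to +\infty$, contradicting the uniform bound $f(x_{k_j}) \leq f(x_0)$. Extracting a convergent subsequence $x_{k_j} \to \bar{x}$ and invoking lower semicontinuity gives
\[
f(\bar{x}) \;\leq\; \liminf_{j \to \infty} f(x_{k_j}) \;=\; f^*.
\]
Since $f^*$ is the infimum, equality holds, so $\bar{x} \in \textnormal{argmin}\,f$. As $f$ takes values in $\mathbb{R} \cup \{+\infty\}$ and $\bar{x} \in \textnormal{dom}(f)$, we also obtain $f^* = f(\bar{x}) \in \mathbb{R}$, proving that the optimal value is finite and the argmin is nonempty.

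For compactness of the argmin, I would identify it with the sublevel set $S = \{x \in \mathbb{R}^n : f(x) \leq f^*\}$. Lower semicontinuity makes every sublevel set closed, and coercivity makes $S$ bounded: any sequence in $S$ with $\|x_k\| \to \infty$ would satisfy $f(x_k) \to +\infty$, contradicting $f(x_k) \leq f^*$. Heine--Borel then gives that $S$ is compact.

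The proof is essentially routine Weierstrass-type reasoning, and I do not expect a substantive obstacle. The only subtle point to keep in mind is to rule out the a priori possibility $f^* = -\infty$; this is handled automatically because the lower semicontinuity inequality at the limit $\bar{x}$ produces a point in $\textnormal{dom}(f)$ that attains the infimum, and hence the infimum is a real number.
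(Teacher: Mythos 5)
Your argument is correct, but note that the paper does not actually prove this proposition: its ``proof'' consists of the single line ``See Rockafellar and Wets \cite{Rockafellar}, Theorem 1.9.'' So you have supplied a self-contained demonstration where the paper simply defers to a reference. Your route is the classical direct method, and it is essentially the reasoning behind the cited theorem (which Rockafellar--Wets phrase via level boundedness): coercivity confines a minimizing sequence to a bounded set, Bolzano--Weierstrass yields a cluster point $\bar{x}$, lower semicontinuity gives $f(\bar{x}) \leq f^*$, and since $f$ never takes the value $-\infty$ this simultaneously rules out $f^* = -\infty$ and shows the infimum is attained; compactness of the argmin then follows because it coincides with the sublevel set $\left\{x \in \mathbb{R}^n : f(x) \leq f^*\right\}$, which is closed by lower semicontinuity and bounded by coercivity. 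One small slip worth fixing: you cannot always arrange $f(x_k) \leq f(x_0)$ by discarding finitely many terms --- in the corner case $f^* = f(x_0)$ the minimizing sequence may approach $f^*$ strictly from above for every $k$. The repair is immediate: any fixed finite bound suffices for the coercivity contradiction, e.g. $f(x_k) \leq f(x_0) + 1$ for all large $k$, which always holds since $f(x_k) \rightarrow f^* \leq f(x_0)$. With that adjustment your proof is complete and needs nothing from outside the paper's stated definitions.
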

\begin{Def}
Let $D \subset \mathbb{R}^n$ and $\bar{x} \in D$.   The normal cone at the point $\bar{x}$ related to the set $D$ is given by $\mathcal{N}_{D}(\bar{x}) = \left\{v \in \mathbb{R}^n: \langle  v,  x - \bar{x}\rangle \leq 0, \forall \ x \in D\right\}$.
\label{normal}
\end{Def}
\subsection{Multiobjective optimization}

In this subsection we present some properties and notation  on multiobjective optimization. Those basic facts can be seen, for example, in  Miettinen \cite{Kaisa} and Luc \cite{Luc}.\\
Throughout this paper we consider the cone $\mathbb{R}^m_+ = \{ y\in \mathbb{R}^m : y_i\geq0, \forall \  i = 1, ... , m \}$, which induce a partial order $\preceq$ in $\mathbb{R}^m$ given by, for $y,y'\in \mathbb{R}^m$,
$y\ \preceq\ y'$ if, and only if, $ y'\ - \ y$  $ \in \mathbb{R}^m_+$, this means that $ y_i \leq \ y'_i$ for all $ i= 1,2,...,m $ .   Given $ \mathbb{R}^m_{++}$ the above relation induce the following one $\prec$, induced  by the interior of this cone,  given by, $y\ \prec\ y'$, if, and only if, $ y'\ - \ y$  $ \in \mathbb{R}^m_{++}$, this means that $ y_i < \ y'_i$ for all $ i= 1,2,...,m$.  Those partial orders establish a class of problems known in the literature as Multiobjective  Optimization.\\ \\
Let us consider the unconstrained multiobjective optimization problem (MOP) :
\begin{eqnarray}
 \textrm{min} \left\{G(x): x \in \mathbb{R}^n \right\}
  \label{POM}
\end{eqnarray}
where $G:\mathbb{R}^n\longrightarrow \mathbb{R}^m$, with $G = \left(G_1, G_2, ... , G_m\right)^T$.
\begin{Def} {\bf (Miettinen \cite{Kaisa}, Definition 2.2.1)}
 A point $x^* \in \mathbb{R}^n$ is a Pareto solution of the problem $\left(\ref{POM}\right)$, if there does not exist $x \in  \mathbb{R}^n $ such that $ G_{i}(x) \leq G_{i}(x^*)$, for all $i \in \left\{1,...,m\right\}$ and $ G_{j}(x) <  G_{j}(x^*)$, for at least one index $ j \in \left\{1,...,m\right\}$ .
\end{Def}
\begin{Def}{\bf (Miettinen \cite{Kaisa},Definition 2.5.1)}
 A point $x^* \in \mathbb{R}^n$ is a weak Pareto solution of the problem $\left(\ref{POM}\right)$, if there does not exist $x \in  \mathbb{R}^n $ such that $ G_{i}(x) < G_{i}(x^*)$, for all $i \in \left\{1,...,m\right\}$.
\end{Def}
We denote by arg min$\left\{G(x):x\in \mathbb{R}^n \right\}$ and by arg min$_w$ $\left\{G(x):x\in \mathbb{R}^n \right\}$ the set of Pareto solutions and weak Pareto solutions to the problem $\left(\ref{POM}\right)$, respectively.  It is easy to check that\\ arg min$\left\{G(x):x\in \mathbb{R}^n \right\} \subset$ arg min$_w$ $\left\{G(x):x\in \mathbb{R}^n \right\}$.
\subsection{Quasiconvex and Convex Functions}
In this subsection we present the concept and characterization of quasiconvex functions and quasiconvex  multiobjective function. This theory can be found in Bazaraa et al. \cite{Bazaraa}, Luc \cite{Luc}, Mangasarian \cite{Mangasarian}, and their references.
\begin{Def}
Let $f:\mathbb{R}^n\longrightarrow \mathbb{R} \cup \{+ \infty \}$ be a proper function.  Then, f is
called quasiconvex if for all $x,y\in \mathbb{R}^n$, and for all $ t \in \left[0,1\right]$, it holds
that $f(tx + (1-t) y)\leq \textnormal{max}\left\{f(x),f(y)\right\}$.
\end{Def}
\begin{Def}
Let $f:\mathbb{R}^n\longrightarrow \mathbb{R} \cup \{+ \infty \}$ be a proper function.  Then, f is
called convex if for all $x,y\in \mathbb{R}^n$, and for all $ t \in \left[0,1\right]$, it holds
that $f(tx + (1-t) y)\leq tf(x) + (1 - t)f(y)$.
\end{Def}
Observe that if $f$ is a quasiconvex function then dom$(f)$ is a convex set. On the other hand, while a convex function can be characterized by the convexity of its epigraph, a quasiconvex function can
be characterized by the convexity of the lower level sets:

\begin{Def}{\bf (Luc \cite{Luc}, Corollary $6.6$)}
Let \ \ $F= (F_1,...,F_m)^T:\mathbb{R}^n\longrightarrow \mathbb{R}^m$ be a function, then $F$ is $\mathbb{R}^m_+$ -  quasiconvex if and only if every component function of $F$, $F_i: \mathbb{R}^n\longrightarrow \mathbb{R}$, is quasiconvex.
\end{Def}

\begin{Def}
Let \ \ $F= (F_1,...,F_m)^T:\mathbb{R}^n\longrightarrow \mathbb{R}^m$ be a function, then $F$ is $\mathbb{R}^m_+$ -  convex if and only if every component function of $F$, $F_i: \mathbb{R}^n\longrightarrow \mathbb{R}$, is convex.
\end{Def}

\begin{Def}
Let \ \ $F= (F_1,...,F_m)^T:\mathbb{R}^n\longrightarrow \mathbb{R}^m$ be a function, then $F$ is locally Lipschitz on $\mathbb{R}^n$ if and only if every component function of $F$, $F_i: \mathbb{R}^n\longrightarrow \mathbb{R}$, is locally Lipschitz on $\mathbb{R}^n$.
\end{Def}

\subsection{Fréchet  and Limiting Subdifferentials}
\begin{Def}
 Let $f: \mathbb{R}^n \rightarrow \mathbb{R} \cup \{ +\infty \}$ be a proper function.
 \begin{enumerate}
 \item [(a)]For each $x \in \textnormal{dom}(f)$, the set of regular subgradients (also called Fréchet subdifferential) of $f$ at $x$, denoted by $\hat{\partial}f(x)$, is the set of vectors $v \in \mathbb{R}^n$ such that
\begin{center}
$f(y) \geq f(x) + \left\langle v,y-x\right\rangle + o(\left\|y - x\right\|)$, where $\lim \limits_{y \rightarrow x}\frac{o(\left\|y - x\right\|)}{\left\|y - x\right\|} =0$.
\end{center}
Or  equivalently, $\hat{\partial}f (x) := \left\{ v \in \mathbb{R}^n : \liminf \limits_{y\neq x,\ y \rightarrow x} \dfrac{f(y)- f(x)- \langle v , y - x\rangle}{\lVert y - x \rVert} \geq 0 \right \}$.  If $x \notin \textnormal{dom}(f)$ then $\hat{\partial}f(x) = \emptyset$.
\item [(b)]The set of general subgradients (also called limiting subdifferential) $f$ at $x \in \mathbb{R}^n$, denoted by $\partial f(x)$, is defined as follows:
\begin{center}
$\partial f(x) := \left\{ v \in \mathbb{R}^n : \exists\  x_n \rightarrow x, \ \ f(x_n) \rightarrow f(x), \ \ v_n \in \hat{\partial} f(x_n)\  \textnormal{and}\  v_n \rightarrow v \right \}$.
\end{center}
\end{enumerate}
\label{fech}
\end{Def}
\begin{proposicao}
\label{Rockwets1}
 For a function  $f: \mathbb{R}^n \rightarrow \mathbb{R} \cup \{+ \infty \}$ and a point $\bar{x}\in \textnormal{dom}(f)$, the subgradient sets $\partial f(\bar{x})$ and $\hat{\partial} f(\bar{x})$ are closed, with $\hat{\partial} f(\bar{x})$ convex and $\hat{\partial}f\left(\bar{x}\right)$ $\subset$ $\partial f\left(\bar{x}\right)$.
\end{proposicao}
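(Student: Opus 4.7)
The plan is to verify the four claims in the order (i) convexity of $\hat{\partial}f(\bar{x})$, (ii) closedness of $\hat{\partial}f(\bar{x})$, (iii) the inclusion $\hat{\partial}f(\bar{x}) \subset \partial f(\bar{x})$, and (iv) closedness of $\partial f(\bar{x})$. All four follow directly from Definition \ref{fech}, so the work is in organizing the difference-quotient estimates cleanly rather than invoking deeper machinery.

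For convexity, I would fix $v_1,v_2 \in \hat{\partial}f(\bar{x})$ and $t\in[0,1]$, and write the difference quotient associated to $tv_1+(1-t)v_2$ as the convex combination of the difference quotients associated to $v_1$ and $v_2$. Since $\liminf(\alpha F+\beta G)\geq \alpha\liminf F + \beta\liminf G$ whenever $\alpha,\beta\geq 0$, each summand has nonnegative $\liminf$ as $y\to \bar{x}$ with $y\neq \bar{x}$, giving the required membership. For closedness of $\hat{\partial}f(\bar{x})$, take $v_k\to v$ with $v_k\in \hat{\partial}f(\bar{x})$, fix $\varepsilon>0$, and pick $k$ with $\|v_k-v\|<\varepsilon/2$. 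The Cauchy--Schwarz estimate $|\langle v_k-v,\,y-\bar{x}\rangle|\leq \|v_k-v\|\,\|y-\bar{x}\|$ converts the difference quotient for $v_k$ into that for $v$ up to an additive error at most $\varepsilon/2$; passing to the $\liminf$ yields a bound $\geq -\varepsilon$, and letting $\varepsilon\downarrow 0$ finishes the step. The inclusion $\hat{\partial}f(\bar{x})\subset \partial f(\bar{x})$ is then immediate from the limiting definition by taking the constant sequences $x_n=\bar{x}$ and $v_n=v$, for which $f(x_n)=f(\bar{x})$ trivially.

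The only step that goes beyond definition-chasing, and hence the main (mild) obstacle, is closedness of $\partial f(\bar{x})$, which requires a diagonal extraction. Given $v_k\to v$ with $v_k\in \partial f(\bar{x})$, each $v_k$ is itself a limit of $v^{k}_{n}\in \hat{\partial}f(x^{k}_{n})$ along sequences $x^{k}_{n}\to \bar{x}$ with $f(x^{k}_{n})\to f(\bar{x})$. I would choose, for each $k$, an index $n_k$ so large that simultaneously $\|x^{k}_{n_k}-\bar{x}\|<1/k$, $|f(x^{k}_{n_k})-f(\bar{x})|<1/k$, and $\|v^{k}_{n_k}-v_k\|<1/k$; the triangle inequality $\|v^{k}_{n_k}-v\|\leq \|v^{k}_{n_k}-v_k\|+\|v_k-v\|$ then shows that the diagonal sequence $(x^{k}_{n_k},v^{k}_{n_k})$ certifies $v\in \partial f(\bar{x})$. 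No semicontinuity or Lipschitz hypothesis on $f$ is needed at any point, which matches the generality in which the proposition is stated.
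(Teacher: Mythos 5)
Your proposal is correct, but it is worth noting that the paper does not actually prove this proposition at all: its ``proof'' is a one-line citation to Rockafellar and Wets, Theorem 8.6, of which the statement here is a verbatim excerpt. What you have written is a self-contained, from-first-principles verification of that cited result, and all four steps check out: the convex-combination decomposition of the difference quotient (together with superadditivity of $\liminf$, which is safe here since both liminfs are nonnegative, so no $\infty-\infty$ issue arises) gives convexity; the Cauchy--Schwarz perturbation bound $\bigl|\langle v_k-v, y-\bar{x}\rangle\bigr|/\|y-\bar{x}\|\leq\|v_k-v\|$ gives closedness of $\hat{\partial}f(\bar{x})$; constant sequences give the inclusion (note $f(\bar{x})$ is finite since $\bar{x}\in\textnormal{dom}(f)$, so the $f$-attentive condition is trivially met); and the diagonal extraction correctly handles closedness of $\partial f(\bar{x})$, which is in essence the standard fact that outer limits of set-valued mappings under $f$-attentive convergence are closed --- the same mechanism underlying the proof in Rockafellar and Wets. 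Your observation that no lower semicontinuity or Lipschitz hypothesis is needed is also accurate. The trade-off is the usual one: the paper's citation is economical and defers to a canonical reference, while your argument makes the proposition verifiable within the paper using only Definition \ref{fech}.
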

\begin{proof}
See Rockafellar and Wets \cite{Rockafellar}, Theorem 8.6.
\end{proof}

\begin{proposicao}{\bf (Fermat’s rule generalized)}
If a proper function $f: \mathbb{R}^n \rightarrow \mathbb{R} \cup \{+ \infty \}$ has a local minimum at $\bar{x} \in dom{(f)}$, then $0\in \hat{\partial} f\left(\bar{x}\right)$.
\label{otimo}
\end{proposicao}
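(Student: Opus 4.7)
The plan is to take the equivalent characterization of $\hat{\partial}f(\bar{x})$ stated in Definition~\ref{fech}(a) and verify directly that the vector $v=0$ satisfies it. This reduces the entire argument to checking a single liminf inequality.

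First, I would invoke the local minimum hypothesis to produce a radius $\delta > 0$ such that $f(y) \geq f(\bar{x})$ for every $y$ in the ball $B(\bar{x}, \delta)$; since $\bar{x} \in \mathrm{dom}(f)$, the value $f(\bar{x})$ is finite, so the inequality is meaningful. Substituting $v = 0$ into the defining quotient
\[
\frac{f(y) - f(\bar{x}) - \langle v, y - \bar{x}\rangle}{\|y - \bar{x}\|}
\]
collapses it to $(f(y) - f(\bar{x}))/\|y - \bar{x}\|$, whose numerator is nonnegative and whose denominator is strictly positive for every $y \in B(\bar{x}, \delta) \setminus \{\bar{x}\}$. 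Hence the quotient is nonnegative on a punctured neighborhood of $\bar{x}$, and its liminf as $y \to \bar{x}$ with $y \neq \bar{x}$ is therefore nonnegative as well. By the equivalent characterization of the Fréchet subdifferential, this is exactly the statement $0 \in \hat{\partial}f(\bar{x})$.

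There is no serious obstacle in this argument: the proposition is essentially an immediate translation of the local minimum condition into the language of Fréchet subgradients. The only mild points to keep track of are that $f(\bar{x})$ must be finite, which is guaranteed by $\bar{x} \in \mathrm{dom}(f)$, and that the restriction $y \neq \bar{x}$ in the liminf is harmless because it merely reflects that the subdifferential is defined via one-sided behavior on a punctured neighborhood of $\bar{x}$.
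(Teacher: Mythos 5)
Your proposal is correct: the local minimum hypothesis gives a ball on which $f(y) \geq f(\bar{x})$, and substituting $v = 0$ into the liminf characterization of Definition~\ref{fech}(a) makes the difference quotient $\bigl(f(y) - f(\bar{x})\bigr)/\lVert y - \bar{x}\rVert$ nonnegative on a punctured neighborhood, so its liminf is nonnegative and $0 \in \hat{\partial}f(\bar{x})$. The paper, however, does not argue this at all; its ``proof'' is a one-line citation to Rockafellar and Wets \cite{Rockafellar}, Theorem 10.1. So your route is genuinely different in character: you give a self-contained verification directly from the definition the paper itself states, which makes the logical dependence transparent (the result is an immediate translation of local minimality into the Fréchet subgradient language, with the only bookkeeping being finiteness of $f(\bar{x})$ at points of $\mathrm{dom}(f)$ and positivity of the denominator for $y \neq \bar{x}$). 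What the citation buys the paper is brevity and an appeal to a standard reference that also covers the surrounding calculus; what your argument buys is independence from that reference and a proof readable with only the paper's own Definition~\ref{fech}. Both are valid; yours is arguably preferable in a self-contained exposition, and it contains no gaps.
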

\begin{proof}
See Rockafellar and Wets \cite{Rockafellar}, Theorem 10.1.
\end{proof}
\begin{proposicao}
Let $f, g : \mathbb{R}^n \rightarrow \mathbb{R} \cup \{+ \infty \}$ proper functions such that $f$ is locally Lipschitz at $\bar{x} \in$ dom$(f)\ \cap\ $dom$(g)$ and $g$ is lower semicontinuous function at this point. Then,
\begin{center}
$\partial(f + g)(\bar{x})\subset \partial f(\bar{x}) + \partial g(\bar{x})$
\end{center}
\label{mordukhovich}
\end{proposicao}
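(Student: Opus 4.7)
The plan is to invoke the Mordukhovich sum rule for the limiting subdifferential, whose standard proof proceeds through the Fréchet (regular) subdifferential and a ``fuzzy'' sum rule, then takes limits while exploiting the Lipschitz assumption on $f$ to secure boundedness. So I would first unpack the definition and then reassemble a decomposition by passing to a subsequence.

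Concretely, pick $v \in \partial(f+g)(\bar x)$. By Definition \ref{fech}(b) there exist sequences $x_n \to \bar x$ with $(f+g)(x_n) \to (f+g)(\bar x)$ and $v_n \in \hat\partial(f+g)(x_n)$ with $v_n \to v$. Because $f$ is locally Lipschitz at $\bar x$ it is in particular continuous there, so $f(x_n) \to f(\bar x)$; subtracting, $g(x_n) \to g(\bar x)$ as well. This separation of the convergences of $f$ and $g$ along the sequence is the first key observation and uses the Lipschitz hypothesis in an essential way.

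Next I would apply the fuzzy sum rule for regular subdifferentials (Rockafellar--Wets, Thm.~10.1/Exer.~10.43, or equivalently the original form due to Mordukhovich): for a sequence $\varepsilon_n \downarrow 0$, at each $x_n$ one can find points $y_n$, $z_n$ within $\varepsilon_n$ of $x_n$, with $|f(y_n)-f(x_n)| \le \varepsilon_n$ and $|g(z_n)-g(x_n)| \le \varepsilon_n$, and regular subgradients $a_n \in \hat\partial f(y_n)$, $b_n \in \hat\partial g(z_n)$ such that $\|a_n+b_n-v_n\| \le \varepsilon_n$. Since $f$ is Lipschitz with constant $L_{\bar x}$ on a neighborhood of $\bar x$, the inclusion $a_n \in \hat\partial f(y_n)$ forces $\|a_n\| \le L_{\bar x}$ for all large $n$, so $\{a_n\}$ is bounded and has a convergent subsequence $a_n \to a$. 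Along that subsequence $b_n = (v_n - a_n) + o(1) \to v - a =: b$.

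It remains to identify the limits: $y_n \to \bar x$ and $f(y_n) \to f(\bar x)$ with $a_n \in \hat\partial f(y_n)$ and $a_n \to a$ give $a \in \partial f(\bar x)$ by definition, and similarly $b \in \partial g(\bar x)$. Hence $v = a + b \in \partial f(\bar x) + \partial g(\bar x)$, which is the desired inclusion. The main obstacle is really the fuzzy sum rule invoked above, which is the technical engine of the result; everything else is the standard diagonal/subsequence argument, with the Lipschitz hypothesis playing the dual role of ensuring (i)~separate convergence of $f(x_n)$ and $g(x_n)$ and (ii)~boundedness of the extracted subgradients of $f$, without which $\{a_n\}$ might escape to infinity and the decomposition would collapse.
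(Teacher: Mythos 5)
Your argument is sound, but note that the paper does not actually prove this proposition: its ``proof'' is the single line ``See Mordukhovich, Theorem 2.33.'' What you have written is, in essence, the standard proof of that cited theorem: reduce the limiting sum rule to the fuzzy (approximate) sum rule for Fr\'echet subdifferentials, then pass to the limit, using the Lipschitz hypothesis exactly twice --- once to split the convergence $(f+g)(x_n) \to (f+g)(\bar x)$ into the separate convergences $f(x_n) \to f(\bar x)$ and $g(x_n) \to g(\bar x)$, and once to bound the Fr\'echet subgradients $a_n$ of $f$ by the Lipschitz constant so that a convergent subsequence can be extracted. Both uses are correct, as is the final limit identification $a \in \partial f(\bar x)$, $b \in \partial g(\bar x)$, which follows directly from Definition \ref{fech}(b). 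So your route is genuinely more informative than the paper's, though not self-contained either: the fuzzy sum rule you invoke is itself a nontrivial theorem (valid in $\mathbb{R}^n$, and more generally in Asplund spaces), so in the end both you and the authors lean on the literature, just at different depths --- they cite the finished sum rule, you cite its technical engine and supply the limiting argument yourself. Two small corrections: Theorem 10.1 of Rockafellar--Wets is the generalized Fermat rule (it is Proposition \ref{otimo} of this paper), not the fuzzy sum rule, so that reference is off; and a careful application of the fuzzy sum rule at the points $x_n$ requires $g$ to be lower semicontinuous on a neighborhood of $\bar x$ (as in Mordukhovich's actual hypotheses), not merely at the single point $\bar x$ --- a looseness already present in the paper's own statement, which your proof inherits rather than introduces.
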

\begin{proof}
See Mordukhovich \cite{Mordukhovich} Theorem 2.33.
\end{proof}

\subsection{\bf Clarke Subdifferential}
\begin{definicao}
Let $f: \mathbb{R}^n \rightarrow \mathbb{R} \cup \{+ \infty \}$ be a proper locally Lipschitz function at $x \in \textnormal{dom}(f)$ and $d \in \mathbb{R}^n$.  The Clarke directional derivative of $f$ at $x$ in the direction $d$, denoted by $f^o(x,d)$, is defined as
\begin{center}
$f^o (x,d) = \limsup\limits_{t\downarrow 0 \ \  y \rightarrow x} \dfrac{f(y + td)- f(y)}{t}$
\end{center}
and the Clarke subdifferential of $f$ at $x$, denoted by $\partial ^of(x)$, is defined as
\begin{center}
$\partial ^of(x)= \lbrace w \in \mathbb{R}^n:\langle w , d \rangle \leq f^o (x,d), \forall\  d \in \mathbb{R}^n \rbrace$.
\end{center}
\label{subclarke1}
\end{definicao}
\begin{obs}
From the above definitions it follows directly that for all $x \in \mathbb{R}^n$, one has
$\hat\partial f(x) \subset \partial f(x) \subset \partial ^o f(x)$ (see Bolte et al. \cite{Bolte}, Inclusion (7)).
\label{frechetclarke}
\end{obs}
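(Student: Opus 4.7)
The plan is to treat the two inclusions separately, since only the second requires any real work. For $\hat\partial f(x)\subset \partial f(x)$, I would simply invoke Proposition \ref{Rockwets1}, or equivalently observe that any $v\in\hat\partial f(x)$ is witnessed in Definition \ref{fech}(b) by taking the constant sequences $x_n\equiv x$ and $v_n\equiv v$, so $v\in\partial f(x)$.

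For $\partial f(x)\subset \partial^o f(x)$, fix $v\in\partial f(x)$ and $d\in\mathbb{R}^n$; by Definition \ref{subclarke1} it suffices to verify $\langle v,d\rangle\le f^o(x,d)$. By Definition \ref{fech}(b) there exist sequences $x_n\to x$ with $f(x_n)\to f(x)$ and $v_n\in\hat\partial f(x_n)$ with $v_n\to v$. Assume $d\ne 0$ (the case $d=0$ being trivial). Applying the liminf characterization of the Fréchet subdifferential in Definition \ref{fech}(a) at the base point $x_n$ along the points $y=x_n+td$, I get, for every prescribed $\varepsilon>0$, a threshold $\delta_n>0$ such that
\[
\frac{f(x_n+td)-f(x_n)}{t}\ \geq\ \langle v_n, d\rangle\ -\ \varepsilon\|d\|\qquad\text{for all } 0<t<\delta_n.
\]
Then I would extract a diagonal sequence $t_n\downarrow 0$ with $0<t_n<\delta_n$, so that $(x_n,t_n)$ is admissible in the joint limsup defining $f^o(x,d)$. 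Passing to the limit gives $f^o(x,d)\ge \langle v,d\rangle-\varepsilon\|d\|$, and sending $\varepsilon\downarrow 0$ yields the required inequality.

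The only delicate point is the diagonal coupling: the Fréchet inequality at each $x_n$ is uniform in $t$ only below a point-dependent threshold $\delta_n$, whereas $f^o(x,d)$ is a joint limsup in $(y,t)$, so one must match the speed at which $t_n\to 0$ against the $\delta_n$ before passing to the limit. The local Lipschitz hypothesis of Definition \ref{subclarke1} is what guarantees that $f^o(x,d)$ is finite and that this matching is legitimate; everything else is merely an unpacking of the three subdifferential definitions, which justifies the \emph{follows directly} phrasing of the remark.
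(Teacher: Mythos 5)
Your argument is correct, but it is worth noting that it does genuinely more than the paper does: the paper offers no proof of this remark at all, disposing of it by citing Bolte et al. \cite{Bolte}, Inclusion (7), where the chain $\hat\partial f(x) \subset \partial f(x) \subset \partial^o f(x)$ rests on the representation of the Clarke subdifferential of a locally Lipschitz function as the closed convex hull of its limiting subgradients. Your treatment of the first inclusion coincides with what the paper already records (Proposition \ref{Rockwets1}, or equivalently the constant-sequence witness $x_n \equiv x$, $v_n \equiv v$ in Definition \ref{fech}(b)). The substance is your second inclusion, and it holds up: restricting the Fr\'echet liminf at each $x_n$ to the ray $y = x_n + td$ legitimately weakens the liminf, producing for each $\varepsilon > 0$ a threshold $\delta_n$; choosing $t_n < \min\{\delta_n, 1/n\}$ makes $(x_n, t_n)$ admissible in the joint limsup of Definition \ref{subclarke1}, and you correctly keep $\varepsilon$ fixed while passing to the limit in $n$ before sending $\varepsilon \downarrow 0$ (the right order, since $\delta_n$, hence $t_n$, depends on $\varepsilon$). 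This yields a self-contained, elementary verification that avoids the convex-hull machinery implicit in the citation, which is a genuine gain in transparency. One small imprecision: the diagonal matching itself needs no Lipschitz hypothesis (the inequalities are valid in the extended reals); local Lipschitz continuity is needed only so that $f^o(x,d)$ and $\partial^o f(x)$ are well defined and finite-valued, i.e.\ so that the statement makes sense at all.
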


\begin{lema}
Let $f, g : \mathbb{R}^n \rightarrow \mathbb{R} \cup \{+ \infty \}$ be locally  Lipschitz functions at $x \in \mathbb{R}^n $.  Then, $\forall  d  \in \mathbb{R}^n$:
\begin{enumerate}
\item [(i)]$\left( f + g \right)  ^ o \left( x , d\right)   \leq f^o \left( x , d\right)   + g^o \left( x, d\right)$ ;
\item[(ii)]$ \left(\lambda f \right)^ o \left( x , d\right) = \lambda \left( f^o (x , d) \right),\ \forall \lambda \geq 0 $;
\item[(iii)]$ f ^ o \left( x , \lambda d\right) = \lambda f^o (x , d),\ \forall \lambda \geq 0 $.
\end{enumerate}
\label{algebra}
\end{lema}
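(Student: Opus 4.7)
\bigskip
\noindent\textbf{Plan of proof.} All three items follow directly from the defining expression
$f^o(x,d) = \limsup_{t\downarrow 0,\ y\to x}\bigl(f(y+td)-f(y)\bigr)/t$
together with elementary properties of $\limsup$ and a change of variable. The local Lipschitz hypothesis is used only implicitly, to guarantee that the quantities appearing below are finite (so that the $\limsup$ operations and the splitting of sums do not produce indeterminate forms such as $+\infty-\infty$).

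For item (i), I would write the incremental quotient of $f+g$ as the sum of the incremental quotients of $f$ and of $g$, and then apply the subadditivity of $\limsup$:
\begin{equation*}
\limsup_{t\downarrow 0,\ y\to x}\bigl(A(t,y)+B(t,y)\bigr) \ \leq\ \limsup_{t\downarrow 0,\ y\to x} A(t,y) + \limsup_{t\downarrow 0,\ y\to x} B(t,y).
\end{equation*}
The local Lipschitz property of $f$ and $g$ near $x$ ensures that both individual $\limsup$s are finite real numbers, so this step is legitimate.

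For item (ii), if $\lambda>0$ the factor $\lambda$ pulls out of the incremental quotient and, since $\lambda>0$, also out of the $\limsup$. The case $\lambda=0$ is immediate: both sides equal $0$.

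For item (iii), I would perform the substitution $s=\lambda t$ in the $\limsup$. When $\lambda>0$, $t\downarrow 0$ is equivalent to $s\downarrow 0$, and the quotient becomes
\begin{equation*}
\frac{f(y+sd)-f(y)}{s/\lambda}\ =\ \lambda\,\frac{f(y+sd)-f(y)}{s},
\end{equation*}
which after taking the $\limsup$ over $(s,y)$ gives $\lambda f^o(x,d)$. For $\lambda=0$ the incremental quotient is identically $0$, so $f^o(x,0)=0=0\cdot f^o(x,d)$.

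None of the three steps presents a real obstacle; the only point that requires mild care is justifying the splitting in (i), which hinges on the finiteness granted by local Lipschitz continuity of $f$ and $g$ at $x$.
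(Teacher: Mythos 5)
Your proposal is correct and follows exactly the route the paper intends: the paper's own proof is the one-line remark that the lemma ``is immediate from the Clarke directional derivative,'' and your argument simply fills in those immediate details (subadditivity of $\limsup$ for (i), extraction of a positive scalar for (ii), the substitution $s=\lambda t$ for (iii)), with the correct observation that local Lipschitzness guarantees finiteness so the splitting in (i) and the trivial cases $\lambda=0$ are legitimate.
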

\begin{proof}
It is immediate from Clarke directional derivative.
\end{proof}
\begin{lema}
 Let $f: \mathbb{R}^n \rightarrow \mathbb{R}$ be locally  Lipschitz function at $x$ and any scalar $\lambda$ , then
\begin{center}
$\partial ^o \left(\lambda f\right)(x)\subset \lambda \partial ^o f(x)$
\end{center}
\label{escalar}
\end{lema}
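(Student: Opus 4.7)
The plan is to take an arbitrary $w\in\partial^o(\lambda f)(x)$, unfold the definition $\langle w,d\rangle\le(\lambda f)^o(x,d)$ for every $d\in\mathbb{R}^n$, and show that the scaled vector (essentially $w/\lambda$) lies in $\partial^o f(x)$; this will give $w\in\lambda\partial^o f(x)$. Since Lemma \ref{algebra}(ii) only handles nonnegative scalars inside $(\cdot)^o$, I would split into the cases $\lambda>0$, $\lambda=0$, and $\lambda<0$.

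For $\lambda>0$, Lemma \ref{algebra}(ii) gives $(\lambda f)^o(x,d)=\lambda f^o(x,d)$ directly. Dividing the subgradient inequality by $\lambda$ yields $\langle w/\lambda,d\rangle\le f^o(x,d)$ for every $d$, hence $w/\lambda\in\partial^o f(x)$. For $\lambda=0$ the function $\lambda f$ is identically zero, so $(\lambda f)^o(x,d)=0$, forcing $\langle w,d\rangle\le 0$ for every $d$ and therefore $w=0\in\{0\}=\lambda\partial^o f(x)$ (using that $\partial^o f(x)$ is nonempty when $f$ is locally Lipschitz at $x$).

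The substantive case is $\lambda<0$, where I would first establish the auxiliary identity $(-f)^o(x,d)=f^o(x,-d)$. This comes from the substitution $y'=y+td$ inside the $\limsup$ defining $(-f)^o(x,d)$: the difference quotient $[-f(y+td)+f(y)]/t$ becomes $[f(y'+t(-d))-f(y')]/t$, and since $y'\to x$ iff $y\to x$ (as $t\downarrow 0$), the two $\limsup$'s coincide. Applying Lemma \ref{algebra}(ii) to the positive scalar $|\lambda|=-\lambda$ then produces $(\lambda f)^o(x,d)=(|\lambda|(-f))^o(x,d)=|\lambda|(-f)^o(x,d)=|\lambda|f^o(x,-d)$. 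Plugging this into $\langle w,d\rangle\le(\lambda f)^o(x,d)$ and setting $e=-d$ gives $\langle -w/|\lambda|,e\rangle\le f^o(x,e)$ for every $e\in\mathbb{R}^n$, so $-w/|\lambda|\in\partial^o f(x)$, i.e. $w\in-|\lambda|\partial^o f(x)=\lambda\partial^o f(x)$.

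The main obstacle is precisely the auxiliary identity $(-f)^o(x,d)=f^o(x,-d)$: once this is in hand the negative case reduces to the positive one already covered by Lemma \ref{algebra}(ii). The delicate point is that for the \emph{Clarke} derivative one moves the base point of the difference quotient (not just the direction), so the substitution in the $\limsup$ must be carried out carefully to keep the correct signs; a naive attempt to treat $\lambda<0$ by pulling the scalar outside the $\limsup$ would fail because $\limsup$ does not commute with multiplication by a negative number.
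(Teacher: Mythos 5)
Your proposal is correct, but it is not the paper's route: the paper offers no argument for this lemma, it simply defers to Clarke's book (Proposition 2.3.1). Your proof reconstructs from the definition essentially the argument found there. The case $\lambda>0$ is immediate from Lemma \ref{algebra}(ii); the case $\lambda=0$ requires, as you rightly flag, the nonemptiness of $\partial^o f(x)$ for a locally Lipschitz $f$ --- a standard fact (Clarke, Proposition 2.1.2), but one the paper never states, so strictly speaking it is the one ingredient you still import from outside; and the case $\lambda<0$ rests entirely on the reflection identity $(-f)^o(x,d)=f^o(x,-d)$, which your change of variables $y'=y+td$ establishes correctly: the substitution is legitimate because $\|y'-y\|=t\|d\|\to 0$, so $(y,t)\to(x,0^+)$ if and only if $(y',t)\to(x,0^+)$, and the two $\limsup$'s range over the same family of difference quotients. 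Your closing remark identifies the real subtlety --- a negative scalar cannot be pulled through a $\limsup$, which is precisely why the reflection identity (moving the sign into the direction and the base point) is needed rather than a naive extension of Lemma \ref{algebra}(ii). Comparing the two: the paper's citation is economical and keeps the exposition short; your argument makes the lemma self-contained, using only Definition \ref{subclarke1} and Lemma \ref{algebra}, and in fact every step is reversible, so it proves the stronger statement $\partial^o(\lambda f)(x)=\lambda\,\partial^o f(x)$, which is what Clarke's Proposition 2.3.1 asserts.
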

\begin{proof}
See Clarke \cite{Clarke}, Proposition 2.3.1.
\end{proof}

\begin{lema}
 Let $f_i: \mathbb{R}^n \rightarrow \mathbb{R} \cup \{+ \infty \}, i=1,2,...,m,$ be locally  Lipschitz functions at $x$, then
\begin{eqnarray*}
\partial ^o \left(\displaystyle\sum_{i=1}^{m} f_i\right)(x)\subset \displaystyle\sum_{i=1}^{m}\partial ^o f_i  (x)
\end{eqnarray*}
\label{subcontido}
\end{lema}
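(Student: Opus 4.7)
The plan is to proceed by induction on $m$, reducing everything to the two-function case $\partial^{o}(f_1+f_2)(x)\subset \partial^{o}f_1(x)+\partial^{o}f_2(x)$. The base case $m=1$ is trivial, and the inductive step from $m$ to $m+1$ is immediate once the sum of two terms is handled (grouping $\sum_{i=1}^{m}f_i$ and $f_{m+1}$ and applying local Lipschitzness of the partial sum, which follows component-wise).

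For the two-function case I would exploit the support-function characterization of the Clarke subdifferential available for locally Lipschitz functions: for $f$ locally Lipschitz at $x$, the set $\partial^{o} f(x)$ is nonempty, convex and compact, and
\begin{equation*}
f^{o}(x,d)=\max\bigl\{\langle w,d\rangle : w\in \partial^{o}f(x)\bigr\}\qquad\forall d\in\mathbb{R}^n.
\end{equation*}
Take $w\in\partial^{o}(f_1+f_2)(x)$. By Definition \ref{subclarke1} and Lemma \ref{algebra}(i),
\begin{equation*}
\langle w,d\rangle \le (f_1+f_2)^{o}(x,d)\le f_1^{o}(x,d)+f_2^{o}(x,d)\qquad\forall d\in\mathbb{R}^n.
\end{equation*}
Now suppose, for contradiction, that $w\notin \partial^{o}f_1(x)+\partial^{o}f_2(x)$. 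The Minkowski sum of two nonempty, convex, compact sets is again nonempty, convex and compact, so by a standard strict separation theorem there exists $d\in\mathbb{R}^n$ with
\begin{equation*}
\langle w,d\rangle > \sup_{v\in\partial^{o}f_1(x)+\partial^{o}f_2(x)}\langle v,d\rangle = \max_{v_1\in\partial^{o}f_1(x)}\langle v_1,d\rangle + \max_{v_2\in\partial^{o}f_2(x)}\langle v_2,d\rangle = f_1^{o}(x,d)+f_2^{o}(x,d),
\end{equation*}
contradicting the previous inequality. Hence $w\in \partial^{o}f_1(x)+\partial^{o}f_2(x)$, and the two-function case is proved.

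The main obstacle is the support-function identity $f^{o}(x,\cdot)=\sigma_{\partial^{o}f(x)}(\cdot)$, together with the compactness/convexity of $\partial^{o}f(x)$; these are classical facts for locally Lipschitz functions (see Clarke \cite{Clarke}, Propositions 2.1.2 and 2.1.5) and are what make the separation argument work. Everything else is bookkeeping: checking that $\sum_{i=1}^{m}f_i$ is locally Lipschitz at $x$ (sum of locally Lipschitz functions), and unfolding the induction.
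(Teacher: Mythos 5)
Your proof is correct, but note that the paper itself does not prove this lemma at all: its ``proof'' is the citation ``See Clarke \cite{Clarke}, Proposition 2.3.3.'' So the honest comparison is between your argument and Clarke's, and on that score you have essentially reconstructed the textbook proof: reduction by induction to two summands, the subadditivity $(f_1+f_2)^{o}(x,d)\leq f_1^{o}(x,d)+f_2^{o}(x,d)$ (the paper's Lemma \ref{algebra}(i)), and the fact that for nonempty convex compact sets inclusion is equivalent to the pointwise inequality of their support functions, which you implement via strict separation together with the identities $f^{o}(x,\cdot)=\max\{\langle w,\cdot\rangle: w\in\partial^{o}f(x)\}$ and ``support function of a Minkowski sum equals sum of support functions.'' Two caveats. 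First, the support-function identity and the nonemptiness/convexity/compactness of $\partial^{o}f(x)$ are nowhere stated in the paper (its Definition \ref{subclarke1} only gives you the one-sided implication $w\in\partial^{o}f(x)\Rightarrow\langle w,d\rangle\leq f^{o}(x,d)$), so within the paper's own toolkit you are still importing material from Clarke's book (Proposition 2.1.2); in effect you trade the citation of the sum rule for a citation of the more fundamental support-function characterization, which does make the lemma less of a black box and keeps the rest of your argument elementary. Second, since the paper allows the $f_i$ to take the value $+\infty$, your compactness claim requires reading ``locally Lipschitz at $x$'' as finiteness of each $f_i$ on a full neighborhood of $x$; this is the intended reading (and is Clarke's setting), but you should state it explicitly, since otherwise $\partial^{o}f_i(x)$ need not be bounded and the separation step would break down.
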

\begin{proof}
See Clarke \cite{Clarke}, Proposition 2.3.3.
\end{proof}

\begin{proposicao}
Let $f: \mathbb{R}^n \rightarrow \mathbb{R} \cup \{+ \infty \}$ be a proper locally Lipschitz function on $\mathbb{R}^n$. Then,  $f^ o $ is upper semicontinuous, i.e, if $ \lbrace(x^k, d^k)\rbrace$ is a sequence in $\mathbb{R}^n\times \mathbb{R}^n$ such that $\lim \limits_{k \rightarrow + \infty}(x^k, d^k) = (x , d) $ then
$\limsup \limits_{k \rightarrow + \infty}f^o (x^k, d^k) \leq f^o (x , d)$.
\label{limsup}
\end{proposicao}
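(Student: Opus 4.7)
The plan is to unwind the definition of $f^o$ as a double $\limsup$ and to use the local Lipschitz property to replace $d^k$ by $d$ inside the difference quotient. Fix $\varepsilon>0$. For each $k$, by the definition of $f^o(x^k,d^k)$ as $\limsup_{t\downarrow 0,\,y\to x^k}\frac{f(y+td^k)-f(y)}{t}$, I can pick $y^k\in\R^n$ and $t_k>0$ such that
\[
 \|y^k - x^k\|\leq 1/k,\qquad 0<t_k\leq 1/k,\qquad \frac{f(y^k+t_k d^k)-f(y^k)}{t_k}\;\geq\; f^o(x^k,d^k)-\varepsilon.
\]
Since $x^k\to x$ and $d^k\to d$, this diagonal choice gives $y^k\to x$, $t_k\downarrow 0$, and $y^k+t_k d^k\to x$, $y^k+t_k d\to x$.

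Now I use local Lipschitzness of $f$ at $x$: there exist $\delta>0$ and $L>0$ such that $|f(u)-f(v)|\leq L\|u-v\|$ for all $u,v\in B(x,\delta)$. For $k$ large, both $y^k+t_k d^k$ and $y^k+t_k d$ lie in $B(x,\delta)$, so
\[
\bigl|f(y^k+t_k d^k)-f(y^k+t_k d)\bigr|\;\leq\; L\,t_k\,\|d^k-d\|,
\]
hence
\[
\frac{f(y^k+t_k d^k)-f(y^k)}{t_k}\;\leq\;\frac{f(y^k+t_k d)-f(y^k)}{t_k}+L\,\|d^k-d\|.
\]
Combining with the selection of $(y^k,t_k)$ above,
\[
 f^o(x^k,d^k)-\varepsilon\;\leq\;\frac{f(y^k+t_k d)-f(y^k)}{t_k}+L\,\|d^k-d\|.
\]

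Finally, take $\limsup_{k\to\infty}$. The term $L\|d^k-d\|$ vanishes. The difference quotient on the right has arguments $y^k\to x$ and $t_k\downarrow 0$, so it is admissible in the $\limsup$ defining $f^o(x,d)$, and therefore
\[
\limsup_{k\to\infty}\frac{f(y^k+t_k d)-f(y^k)}{t_k}\;\leq\; f^o(x,d).
\]
This gives $\limsup_{k\to\infty} f^o(x^k,d^k)\leq f^o(x,d)+\varepsilon$, and since $\varepsilon>0$ was arbitrary, the conclusion follows. The only delicate point is the simultaneous diagonal choice of $(y^k,t_k)$ together with verifying that the Lipschitz constant $L$ can be taken uniform for all large $k$; both rest on standard openness of the Lipschitz neighborhood at $x$ and the fact that local Lipschitzness bounds $|f^o(x^k,d^k)|$ by $L\|d^k\|$, ruling out spurious infinite values.
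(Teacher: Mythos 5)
Your proof is correct. The paper does not prove this proposition itself --- it simply cites Clarke, Proposition 2.1.1(b) --- and your argument (diagonal selection of $(y^k,t_k)$ within $1/k$ together with an $\varepsilon$-approximation of the $\limsup$, followed by the Lipschitz swap of $d^k$ for $d$ in the difference quotient) is precisely the standard proof given in that reference, so it is essentially the same approach.
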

\begin{proof}
See Clarke \cite{Clarke}, Proposition 2.1.1, (b).
\end{proof}

\begin{proposicao}
\label{t11}
Let $f:\mathbb{R}^n\longrightarrow \mathbb{R}$ be a quasiconvex locally Lipschitz function on $\mathbb{R}^n.$ If
$g  \in \partial^o f(x),$ such that $\left\langle g ,\tilde{x} - x \right\rangle > 0$ then, $f(x)\leq f(\tilde{x}).$
\end{proposicao}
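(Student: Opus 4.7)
The plan is to argue by contrapositive: assume $f(\tilde{x}) < f(x)$ and deduce that $\langle g, \tilde{x}-x\rangle \le 0$ for every $g\in\partial^o f(x)$, which contradicts the strict inequality in the hypothesis. The natural bridge between the subgradient and the quasiconvexity hypothesis is the Clarke directional derivative, so the bulk of the work is showing $f^o(x,\tilde{x}-x)\le 0$; from there the conclusion is immediate from Definition \ref{subclarke1}, since $\langle g,\tilde{x}-x\rangle \le f^o(x,\tilde{x}-x)$.

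To estimate $f^o(x,d)$ with $d:=\tilde{x}-x$, I would first use that $f$, being locally Lipschitz, is continuous. Picking any $\alpha$ with $f(\tilde{x})<\alpha<f(x)$, continuity yields a single $\delta>0$ such that $f(y)>\alpha$ on $B(x,\delta)$ and $f(y')<\alpha$ on $B(\tilde{x},\delta)$. The key geometric observation is the following: for any $y\in B(x,\delta)$, the translated point $y':=y+d=\tilde{x}+(y-x)$ lies in $B(\tilde{x},\delta)$, and for every $t\in(0,1)$ one has the convex combination
\[
y+td = (1-t)y + t\,y'.
\]
Quasiconvexity of $f$ then gives $f(y+td)\le \max\{f(y),f(y')\}=f(y)$, since $f(y')<\alpha<f(y)$ by the choice of $\delta$. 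Hence $\frac{f(y+td)-f(y)}{t}\le 0$ uniformly in $y\in B(x,\delta)$ and $t\in(0,1)$, and passing to the limsup as $t\downarrow 0$ and $y\to x$ yields $f^o(x,d)\le 0$.

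Combining with the defining inequality for $g\in\partial^o f(x)$, we obtain $\langle g,\tilde{x}-x\rangle\le f^o(x,\tilde{x}-x)\le 0$, contradicting the assumption $\langle g,\tilde{x}-x\rangle>0$. Therefore $f(\tilde{x})\ge f(x)$, as required.

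The only delicate step is the geometric trick in the middle paragraph: one must let the base point $y$ of the Clarke limsup and the secondary point $y'$ move together (as $y'=y+d$), so that the segment $[y,y']$ always contains $y+td$ while keeping $y$ near $x$ and $y'$ near $\tilde{x}$. Once this pairing is set up, quasiconvexity immediately forces the difference quotient to be nonpositive, and the rest of the argument is bookkeeping using the Clarke subdifferential's definition.
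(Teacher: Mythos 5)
Your proof is correct, but it takes a genuinely different route from the paper: the paper offers no argument at all for this proposition, simply citing Aussel (Theorem 2.1), where the result is obtained within a much more general framework (abstract subdifferentials of lower semicontinuous functions, covering the Clarke--Rockafellar subdifferential among others). Your argument is instead self-contained and elementary. The contrapositive reduction, the choice of a level $\alpha$ strictly between $f(\tilde{x})$ and $f(x)$, and above all the translation pairing $y' = y + d$ with $d = \tilde{x}-x$ are exactly what is needed: the pairing keeps $y$ and $y'$ on opposite sides of the level $\alpha$ while the segment from $y$ to $y'$ contains every point $y+td$, $t\in(0,1)$, so quasiconvexity forces each difference quotient in the $\limsup$ defining $f^o(x,d)$ to be nonpositive, uniformly over the moving base point $y\in B(x,\delta)$; only continuity of $f$ (a consequence of the locally Lipschitz hypothesis) is used. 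What your route buys is a proof relying on nothing beyond Definition \ref{subclarke1} and the definition of quasiconvexity, which is arguably preferable in a paper whose standing assumptions already make $f$ locally Lipschitz; what the paper's citation buys is brevity and the extra generality of Aussel's setting, none of which is actually needed here.
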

\begin{proof}
See Aussel \cite{Aussel}, Theorem 2.1.
\end{proof}
\begin{proposicao}
Let $f : \mathbb{R}^n \longrightarrow \mathbb{R}$ be a convex function. Then $\partial^ o f(x)$ coincides with the subdifferential at $x$ in the sense of convex analysis, and $f^ o (x,d)$ coincides with the directional derivative $f'(x,d)$ for each $d$.
\label{igualdadfc}
\end{proposicao}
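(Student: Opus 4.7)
The plan is to prove the two claims in order: first that $f^o(x,d)=f'(x,d)$ for every direction $d$, and then deduce from this identity that $\partial^o f(x)$ equals the convex subdifferential $\partial_c f(x):=\{v\in\mathbb{R}^n : f(y)\geq f(x)+\langle v,y-x\rangle\ \forall y\}$. Note first that since $f$ is a finite convex function on all of $\mathbb{R}^n$, it is locally Lipschitz, so $f^o(x,d)$ and $\partial^o f(x)$ are well-defined, and the one-sided directional derivative $f'(x,d)=\lim_{t\downarrow 0}\frac{f(x+td)-f(x)}{t}$ exists (the difference quotient is nondecreasing in $t$ by convexity).

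For the equality of the directional derivatives, the inequality $f^o(x,d)\geq f'(x,d)$ is immediate by specializing $y=x$ in the $\limsup$ defining $f^o$. The reverse inequality is the substantive step, and it is where convexity is really used. The idea is: for $y$ near $x$ and $0<t\leq s$, convexity yields
\begin{equation*}
\frac{f(y+td)-f(y)}{t}\ \leq\ \frac{f(y+sd)-f(y)}{s}.
\end{equation*}
Fixing $s>0$ and letting $y\to x$, $t\downarrow 0$, the right-hand side tends to $\frac{f(x+sd)-f(x)}{s}$ by continuity of $f$. Therefore
\begin{equation*}
f^o(x,d)\ \leq\ \inf_{s>0}\frac{f(x+sd)-f(x)}{s}\ =\ f'(x,d),
\end{equation*}
using monotonicity of the quotient in $s$. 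This gives the first claim.

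For the subdifferentials, I would use the standard convex-analytic characterization $v\in\partial_c f(x)\iff \langle v,d\rangle\leq f'(x,d)$ for all $d$. This follows in one direction from $f(x+td)\geq f(x)+t\langle v,d\rangle$ after dividing by $t$ and letting $t\downarrow 0$; and in the other direction by taking an arbitrary $y$, setting $d=y-x$ and $t=1$, and using monotonicity of the difference quotient to pass from $f'(x,d)$ to $f(y)-f(x)$. Combining this with the identity $f^o(x,\cdot)=f'(x,\cdot)$ already proved, we get
\begin{equation*}
\partial^o f(x)=\{v:\langle v,d\rangle\leq f^o(x,d)\ \forall d\}=\{v:\langle v,d\rangle\leq f'(x,d)\ \forall d\}=\partial_c f(x),
\end{equation*}
which is the desired equality.

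The main obstacle is really the single inequality $f^o(x,d)\leq f'(x,d)$: every other step is either a definition-chase or a classical fact from convex analysis. The delicate point there is that in the $\limsup$ we vary both the base point $y$ and the step $t$, so one cannot simply invoke the monotonicity of the quotient at a fixed base point; the convexity argument above is the standard way to overcome this and reduce the bivariate $\limsup$ to a univariate infimum over $s>0$.
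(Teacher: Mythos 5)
Your proof is correct, and it differs from the paper in an essential structural way: the paper offers no argument at all, simply citing Clarke (Proposition 2.2.7), whereas you give a complete self-contained derivation --- which is in substance the classical argument from Clarke's book. Your two steps are sound: (i) the monotonicity of the convex difference quotient at a \emph{fixed} base point $y$, namely $\frac{f(y+td)-f(y)}{t}\leq\frac{f(y+sd)-f(y)}{s}$ for $0<t\leq s$, combined with continuity of the finite convex $f$ as $y\to x$, correctly collapses the bivariate $\limsup$ defining $f^o(x,d)$ to $\inf_{s>0}\frac{f(x+sd)-f(x)}{s}=f'(x,d)$, and the reverse inequality $f^o(x,d)\geq f'(x,d)$ by freezing $y=x$ is immediate; (ii) the identification of the subdifferentials then follows purely from the support-function characterizations, since the paper's Definition of $\partial^o f(x)$ is exactly $\{w:\langle w,d\rangle\leq f^o(x,d)\ \forall d\}$ and the convex subdifferential satisfies $v\in\partial_c f(x)\iff\langle v,d\rangle\leq f'(x,d)\ \forall d$, which you verify in both directions. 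You also correctly flag the one genuinely delicate point --- that monotonicity of the quotient cannot be invoked directly inside the bivariate $\limsup$ --- and resolve it properly. What your approach buys is a proof readable without consulting an external reference; what the paper's citation buys is brevity and deference to a standard source where the same facts are established in greater generality (e.g., for locally Lipschitz $f$ regular in Clarke's sense).
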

\begin{proof}
See Clarke \cite {Clarke}, Proposition 2.2.7
\end{proof}

\subsection{Descent direction}
We are now able to introduce the definition of Pareto-Clarke critical point for  locally Lipschitz functions on $\mathbb{R}^n$, which will play a key role in our paper.

\begin{Def}{\bf (Custódio et al. \cite{Custodio}, Definition $4.6$)}
Let $F= (F_1,...,F_m)^T:\mathbb{R}^n\longrightarrow \mathbb{R}^m $ be locally Lipschitz on $\mathbb{R}^n$. We say that $x^* \in \mathbb{R}^n$ is a Pareto-Clarke critical point of $F$ if,  for all directions $d \in \mathbb{R}^n$, there exists $i_0 = i_{0} (d) \in \lbrace1,...,m\rbrace$ such that $F^o_{i_o}(x^*, d)\geq 0$.
\label{paretoclarke}
\end{Def}
Definition $\ref{paretoclarke}$ says essentially that there is no direction in $\mathbb{R}^n$ that
is descent for all the objective functions (see, for instance, (Custódio et al. \cite{Custodio}). If a point is a Pareto minimizer (local or
global), then it is necessarily a Pareto-Clarke critical point .

\begin{obs}
\label{Descensoo}
Follows  from the previous definition that, if a point $x$ is not Pareto-Clarke critical, there exists a direction $d \in \mathbb{R}^n$ satisfying
\begin{center}
$F_{i}^o(x, d)  < 0, \forall \ i \in \left\{1,...,m\right\}$
\end{center}
This implies that, for each $i \in \lbrace 1,..., m \rbrace$, $d$ is a {\it descent direction}, for each function $F_i$, i.e, there exists \ \ $\varepsilon > 0 $, such that
\begin{center}
 $F_i(x + td)  < F_i(x), \forall \ t \in (0 , \varepsilon], \forall \ i \in \lbrace 1,..., m \rbrace $.
 \end{center}
It is a well known fact that such $d$ is a {\it descent
direction} for the multiobjective  function $F$ at $x$, i.e, $\exists \ \ \varepsilon > 0 $ such that
 \begin{center}
 $ F(x + td) \prec F(x), \ \forall \ t \in (0 , \varepsilon]$.
 \end{center}
 \end{obs}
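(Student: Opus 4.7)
The plan is to verify the three consecutive assertions of Remark~\ref{Descensoo} in order, using only Definition~\ref{paretoclarke} and the definition of the Clarke directional derivative.

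First, I would observe that the existence of a direction $d\in\mathbb{R}^n$ with $F_i^o(x,d)<0$ for every $i\in\{1,\dots,m\}$ is simply the logical negation of Definition~\ref{paretoclarke}: denying the statement ``for every $d$ there exists $i_0=i_0(d)$ such that $F_{i_0}^o(x,d)\geq 0$'' produces the existence of some $d$ for which $F_i^o(x,d)<0$ holds simultaneously for every index $i$.

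Second, I would derive the scalar descent property from the limsup definition of $F_i^o$. Fix $d$ as above and $i\in\{1,\dots,m\}$. Since
\[
F_i^o(x,d)=\limsup_{\substack{t\downarrow 0\\ y\to x}}\frac{F_i(y+td)-F_i(y)}{t}<0,
\]
there exists $\varepsilon_i>0$ such that for every $t\in(0,\varepsilon_i)$ and every $y\in B(x,\varepsilon_i)$ one has
\[
\frac{F_i(y+td)-F_i(y)}{t}\leq \frac{F_i^o(x,d)}{2}<0.
\]
Specializing to $y=x$ yields $F_i(x+td)<F_i(x)$ for every $t\in(0,\varepsilon_i)$.

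Third, to promote this to the vector-valued statement, I would set $\varepsilon=\min_{1\leq i\leq m}\varepsilon_i>0$, which is strictly positive because the minimum is taken over finitely many positive numbers. For each $t\in(0,\varepsilon]$ the strict componentwise inequality $F_i(x+td)<F_i(x)$ holds for all $i$, so $F(x)-F(x+td)\in\mathbb{R}^m_{++}$, which is exactly $F(x+td)\prec F(x)$. The only mildly delicate point is unpacking the joint $\limsup$ in the definition of $F_i^o$ so as to legitimately restrict to the slice $y=x$; this is a routine consequence of the definition of $\limsup$ and does not represent a real obstacle, so no further ingredients beyond Definition~\ref{subclarke1} and Definition~\ref{paretoclarke} are needed.
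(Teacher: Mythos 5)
Your proof is correct, and it is the natural way to fill in this remark, which the paper itself leaves unproved: the statement is given as a Remark asserted to follow from Definition~\ref{paretoclarke}, with the final vector-valued claim dismissed as a ``well known fact'' (citing Cust\'odio et al.) rather than argued. Your three steps --- negating the quantifiers in Definition~\ref{paretoclarke}, extracting from the $\limsup$ definition a neighborhood on which the difference quotients are uniformly bounded above by $\tfrac{1}{2}F_i^o(x,d)<0$ and then specializing to $y=x$, and taking a finite minimum over $i$ --- are exactly the reasoning the paper intends and omits. One cosmetic point: you establish $F_i(x+td)<F_i(x)$ only on the open interval $(0,\varepsilon_i)$, yet assert the vector inequality on the closed interval $(0,\varepsilon]$ with $\varepsilon=\min_i\varepsilon_i$; at $t=\varepsilon$ the component attaining the minimum is not covered. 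Replacing $\varepsilon$ by $\tfrac{1}{2}\min_i\varepsilon_i$ repairs this trivially.
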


\begin{proposicao}
Let $\bar{x}$ be a Pareto-Clarke critical point  of a locally Lipschitz $G:\mathbb{R}^n\longrightarrow \mathbb{R}^m.$  If $G$ is $\mathbb{R}^m_{+}$-convex, then $\bar{x}$ is weak Pareto solution of the problem (\ref{POM}).
\label{Corconvexlips}
\end{proposicao}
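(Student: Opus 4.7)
The plan is to argue by contradiction. Suppose $\bar{x}$ is not a weak Pareto solution of (\ref{POM}). Then there exists $x \in \mathbb{R}^n$ such that $G_i(x) < G_i(\bar{x})$ for every $i \in \{1,\dots,m\}$. Set $d := x - \bar{x}$; the goal is to show that $d$ makes every Clarke directional derivative $G_i^o(\bar{x},d)$ strictly negative, which directly contradicts Definition \ref{paretoclarke}.

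Since $G$ is $\mathbb{R}^m_+$-convex, each $G_i$ is a real-valued convex (hence locally Lipschitz) function on $\mathbb{R}^n$. By Proposition \ref{igualdadfc}, for each $i$ the Clarke directional derivative agrees with the usual one-sided directional derivative, i.e.\ $G_i^o(\bar{x},d) = G_i'(\bar{x},d)$. I would then use the standard monotonicity of difference quotients for convex functions: the map $t \mapsto \frac{G_i(\bar{x}+td)-G_i(\bar{x})}{t}$ is nondecreasing on $(0,\infty)$, so
\begin{equation*}
G_i'(\bar{x},d) \;=\; \lim_{t\downarrow 0}\frac{G_i(\bar{x}+td)-G_i(\bar{x})}{t} \;\leq\; G_i(\bar{x}+d)-G_i(\bar{x}) \;=\; G_i(x)-G_i(\bar{x}) \;<\; 0.
\end{equation*}

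Combining the two observations, $G_i^o(\bar{x},d) < 0$ for every $i \in \{1,\dots,m\}$. But by Definition \ref{paretoclarke}, since $\bar{x}$ is Pareto-Clarke critical, there must exist some index $i_0 = i_0(d)$ with $G_{i_0}^o(\bar{x},d) \geq 0$, which is the desired contradiction. Hence $\bar{x}$ is a weak Pareto solution.

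There is really no main obstacle: the whole argument rests on the identification of $G_i^o$ with the convex directional derivative (already supplied by Proposition \ref{igualdadfc}) and on the elementary convex-analysis inequality $G_i'(\bar{x},d) \le G_i(\bar{x}+d)-G_i(\bar{x})$. The only mild care needed is to make sure the directional derivative is taken in the correct direction $d = x - \bar{x}$ so that the right-hand side of the inequality is exactly $G_i(x)-G_i(\bar{x})$, which is strictly negative by the hypothesis that $\bar{x}$ fails to be a weak Pareto point.
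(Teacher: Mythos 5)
Your proof is correct and follows essentially the same route as the paper: a contradiction argument that takes $d = x - \bar{x}$, uses Proposition \ref{igualdadfc} to identify $G_i^o(\bar{x},d)$ with the convex directional derivative $G_i'(\bar{x},d)$, and bounds that derivative by the strictly negative quantity $G_i(x)-G_i(\bar{x})$. The only cosmetic difference is that you invoke the standard monotonicity of convex difference quotients, whereas the paper writes out the same estimate explicitly via the convexity inequality $G_i(\lambda x^* + (1-\lambda)\bar{x}) \leq \lambda G_i(x^*) + (1-\lambda)G_i(\bar{x})$ before passing to the limit.
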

\begin{proof}
As $\bar{x}$ is a  Pareto-Clarke critical point  of $G$ then for all directions $d$ there exists $i_0 = i_{0} (d) \in \lbrace1,...,m\rbrace$ such that $G^o_{i_o}(\bar{x}, d)\geq 0.$ Now, due that $G$ is $\mathbb{R}^m_{+}-$ convex then the last is equivalent, see Proposition \ref{igualdadfc}, to
\begin{eqnarray}
G'_{i_o}(\bar{x}, d)\geq 0,
\label{naoparetoconvex}
\end{eqnarray}
where $G'_{i_0}(\bar{x}, d)$ is the directional derivative of the convex function $G_{i_0}$ at $\bar{x}$ in the direction $d$.\\
On the other hand, suppose by contradiction that $\bar{x}$ is not a weak Pareto solution of  the problem (\ref{POM}), then exists $x^* \in \mathbb{R}^n$ such that
$$
G(x^*) \prec G(\bar{x}),\  \textnormal{i.e},\  G_i(x^*) < G_i(\bar{x}), \forall i \in {1,...,m}.
$$
Thus, for all $i,$ there exists $\alpha=\alpha(i) > 0$ such that $G_{i}(x^*) =G_{i}(\bar{x}) - \alpha$.  Define  $x_\lambda = \lambda x^* + (1-\lambda)\bar{x}$, $\lambda \in (0,1)$.  From the  $\mathbb{R}^m_+$-convexity of $G$ we have
\begin{eqnarray*}
G_{i}(x_\lambda)=G_{i}(\lambda x^* + (1-\lambda)\bar{x})\leq \lambda G_{i}(x^*) + (1-\lambda)G_{i}(\bar{x})= -\alpha \lambda + G_{i}(\bar{x})
\end{eqnarray*}
Its follows that
\begin{center}
$\dfrac{G_{i}(\bar{x} + \lambda(x^* -\bar{x})) - G_{i}(\bar{x})}{\lambda} \leq - \alpha < 0$, $\forall \lambda \in (0,1)$.
\end{center}
Taking $\bar{d} = x^* - \bar{x} \in \mathbb{R}^n$ and  limit when $\lambda$ converges to zero in the above inequality we obtain a contradiction with (\ref{naoparetoconvex}). Therefore $\bar{x}$ is a weak Pareto solution of the problem  (\ref{POM}).
\end{proof}

\subsection{Fejér convergence}
\begin{Def}
A seguence $\left\{y_k\right\} \subset \mathbb{R}^n$ is said to be Fejér convergent to a set $U\subseteq \mathbb{R}^n$ if,
$\left\|y_{k+1} - u \right\|\leq\left\|y_k - u\right\|, \forall \ k \in \mathbb{N},\ \forall \ u \in U$.
\end{Def}
The following result on Fejér convergence is well known.
\begin{lema}
If $\left\{y_k\right\}\subset \mathbb{R}^n$ is Fejér convergent to some set $U\neq \emptyset$, then:
\begin{enumerate}
\item [(i)]The sequence $\left\{y_k\right\}$ is bounded.
\item [(ii)]If an accumulation point $y$ of $\left\{y_k\right\}$ belongs to $ U$, then $\lim  \limits_{k\rightarrow +\infty}y_k = y$.
 \end{enumerate}
\label{fejerlim1}
\end{lema}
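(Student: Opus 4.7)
The plan is to use the defining Fejér inequality applied at a single suitable element $u \in U$ in each part: an arbitrary fixed $u$ for boundedness, and the accumulation point $y$ itself for the convergence claim.

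For part (i), I would fix any $u \in U$ (which exists since $U \neq \emptyset$) and iterate the Fejér inequality $\|y_{k+1}-u\| \le \|y_k-u\|$ to obtain $\|y_k - u\| \le \|y_0 - u\|$ for all $k$. Then the triangle inequality gives $\|y_k\| \le \|u\| + \|y_0 - u\|$, so $\{y_k\}$ is bounded. This step is completely routine.

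For part (ii), the key observation is that the Fejér inequality applied at $u = y \in U$ says the real sequence $a_k := \|y_k - y\|$ is monotonically nonincreasing, and it is bounded below by $0$, so it converges to some limit $\ell \ge 0$. On the other hand, since $y$ is an accumulation point of $\{y_k\}$, there is a subsequence $\{y_{k_j}\}$ with $y_{k_j} \to y$, hence $a_{k_j} = \|y_{k_j}-y\| \to 0$. Combining these, $\ell = 0$, i.e.\ $\|y_k - y\| \to 0$, which is exactly $\lim_{k\to\infty} y_k = y$.

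Neither part presents a real obstacle; the proof is essentially the standard one-line arguments for Fejér monotone sequences. The only point requiring any care is to notice that in (ii) one must choose $u = y$ (not an arbitrary element of $U$) in the Fejér inequality, so that monotonicity of $\{\|y_k - y\|\}$ combines with convergence of a subsequence to zero to force convergence of the whole sequence. No tools beyond the definition of Fejér convergence, the triangle inequality, and the fact that a monotone sequence with a subsequence converging to a limit converges to the same limit are needed.
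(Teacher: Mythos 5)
Your proof is correct and complete. Note that the paper itself does not prove this lemma at all: its ``proof'' is just a citation to Schott (Theorem 2.7), so your self-contained argument supplies exactly what that reference defers to. Both parts are the standard Fej\'{e}r monotonicity arguments: part (i) follows by iterating the defining inequality at a fixed $u \in U$ and applying the triangle inequality, and part (ii) hinges on the one genuine subtlety you correctly identified, namely applying the Fej\'{e}r inequality at $u = y$ itself, so that the nonincreasing sequence $\left\|y_k - y\right\|$, having a subsequence tending to $0$, must converge to $0$. Nothing is missing.
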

\begin{proof}
See Schott \cite{Schott}, Theorem $2.7$.
\end{proof}


\section{Scalarization proximal point method (SPPM)}

We are interested in solving the unconstrained multiobjective optimization problem (MOP):
\begin{eqnarray}
\textrm{min}\lbrace F(x): x \in \mathbb{R}^n\rbrace
\label{pom}
\end{eqnarray}
where $F: \mathbb{R}^n \rightarrow \mathbb{R}^m$ is a vector function satisfying the following assumptions:
\begin{description}
\item [$\bf (H_1)$] $F$ is locally Lipschitz on $\mathbb{R}^n$.
\item [$\bf (H_2)$] $F$ is $\mathbb{R}^m_+$-quasiconvex.
\end{description}

 \subsection{The algorithm}
 In this subsection, we propose a Scalarization Proximal Point Method with quadratic regula-rization, denoted by {\bf SPPM}, to solve the problem $(\ref{pom})$.\\ \\
 {\bf SPPM Algorithm}
\begin{description}
\item [\bf Initialization:] Choose an arbitrary initial point
\begin{eqnarray}
x^0\in\mathbb{R}^n
\label{inicio}
\end{eqnarray}
\item [Main Steps:] Given $x^k,$ find $x^{k+1}\in \Omega_k$ such that
 \begin{eqnarray}
 0 \in \partial^o\left( \left\langle F(.), z_k\right\rangle  + \dfrac{\alpha_k}{2}\left\langle e_k , z_k\right\rangle \Vert\ .\  - x^k \Vert ^2 \right) (x^{k+1}) + \mathcal{N}_{\Omega_k}(x^{k+1})
 \label{subdiferencial}
 \end{eqnarray}
where $\Omega_k= \left\{ x\in \mathbb{R}^n: F(x) \preceq F(x^k)\right\}$, $\alpha_k > 0 $, $\left\{e_k\right\}\subset \mathbb{R}^m_{++}$, $\left\|e_k\right\| = 1$, $\left\{z_k\right\} \subset \mathbb{R}^m_+\backslash \left\{0\right\}$ and $\left\|z_k\right\| = 1$ .
\item [Stop Criterion:] If $x^{k+1}=x^{k}$ or $x^{k+1}$ is a Pareto-Clarke critical point, then stop. Otherwise, to do $k \leftarrow k+1$ and return to Main Steps.
\end{description}
\begin{obs} If $F$  is $\R^{n}_{+}-$convex  the main step  (\ref{subdiferencial})  is equivalent to:
\begin{eqnarray}
 x^{k+1}=\textrm{argmin} \left\{ \left\langle F(x), z_k\right\rangle+\frac{\alpha_k}{2}\left\langle e_k , z_k\right\rangle\left\|x - x^k\right\|^2 : x\in\Omega_k\right\}
 \label{subdiferencial2}
 \end{eqnarray}
This iteration has been studied by  Bonnel et al. \cite{Iusem}, so we can say that,  in certain sense, our iteration is an extension for the nonconvex case of that work . On the other hand, when $F$ is $\R^{n}_{+}-$quasiconvex, the regularized function  $F_k=\left\langle F(x), z_k\right\rangle + \frac{\alpha_k}{2}\left\langle e_k , z_k\right\rangle\left\|x - x^k\right\|^2$ is not necessarily quasiconvex and so  (\ref{subdiferencial2}) is a global optimization problem, it is the reason for which we consider the more weak  iteration  (\ref{subdiferencial}).

\end{obs}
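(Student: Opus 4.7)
The plan is to exploit the $\mathbb{R}^m_+$-convexity of $F$ to reduce the subdifferential inclusion (\ref{subdiferencial}) to the standard first-order optimality condition for a strongly convex minimization problem over a closed convex set, from which the argmin formulation (\ref{subdiferencial2}) follows as an equivalent characterization.

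First I would verify the structural convexity that the claim relies on. Since each $F_i$ is continuous and convex, the constraint set $\Omega_k = \{x : F(x) \preceq F(x^k)\}$ is the intersection of closed convex sublevel sets, hence closed and convex. The regularized function
\begin{eqnarray*}
\phi_k(x) := \langle F(x), z_k\rangle + \frac{\alpha_k}{2}\langle e_k, z_k\rangle \,\|x - x^k\|^2
\end{eqnarray*}
is strongly convex: the first summand is a nonnegative linear combination of the convex $F_i$'s because $z_k \in \mathbb{R}^m_+$, and the quadratic penalty has strictly positive coefficient since $\alpha_k > 0$ together with $e_k \in \mathbb{R}^m_{++}$ and $z_k \in \mathbb{R}^m_+ \setminus \{0\}$ force $\langle e_k, z_k\rangle > 0$.

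Next I would apply Proposition \ref{igualdadfc} to identify the Clarke subdifferential $\partial^o \phi_k(x^{k+1})$ with the convex subdifferential $\partial \phi_k(x^{k+1})$, so that (\ref{subdiferencial}) rewrites as $0 \in \partial \phi_k(x^{k+1}) + \mathcal{N}_{\Omega_k}(x^{k+1})$. By the classical necessary-and-sufficient optimality condition of convex analysis, this inclusion is equivalent to $x^{k+1}$ minimizing $\phi_k$ over $\Omega_k$; strong convexity of $\phi_k$ then promotes the minimizer to a unique element, yielding the equality $x^{k+1} = \textrm{argmin}\{\phi_k(x) : x \in \Omega_k\}$ of (\ref{subdiferencial2}).

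The main technical delicacy I expect lies in the ``$\Leftarrow$'' direction of the optimality characterization: one must justify the sum rule $\partial(\phi_k + \iota_{\Omega_k})(x^{k+1}) \subset \partial \phi_k(x^{k+1}) + \mathcal{N}_{\Omega_k}(x^{k+1})$, where $\iota_{\Omega_k}$ denotes the indicator of $\Omega_k$, so as to pass from the generalized Fermat rule on the unconstrained reformulation to an inclusion involving the normal cone. In the convex setting this is essentially the Moreau--Rockafellar theorem; alternatively Proposition \ref{mordukhovich} applies directly, since $\phi_k$ is continuous (hence locally Lipschitz) and $\iota_{\Omega_k}$ is lower semicontinuous. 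The reverse implication is immediate from convexity: any point where $0$ lies in the subdifferential plus normal cone is automatically a global minimizer over $\Omega_k$, which closes the equivalence.
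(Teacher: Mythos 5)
Your proof is correct. The paper states this remark without an explicit proof, so the comparison is with the machinery the paper deploys elsewhere, and your argument is exactly the natural completion of that machinery. The direction ``$x^{k+1}$ solves (\ref{subdiferencial2}) $\Rightarrow$ (\ref{subdiferencial})'' is, almost verbatim, the proof of Theorem \ref{existe}: generalized Fermat rule (Proposition \ref{otimo}), Proposition \ref{Rockwets1}, the sum rule of Proposition \ref{mordukhovich} applied to $\phi_k + \delta_{\Omega_k}$, then Remark \ref{frechetclarke}; note this half needs no convexity at all and holds under ${\bf (H_1)}$ and ${\bf (H_2)}$ alone. Convexity of $F$ enters only in your converse step, and there your use of Proposition \ref{igualdadfc} to replace $\partial^o\phi_k(x^{k+1})$ by the convex-analytic subdifferential, followed by the subgradient inequality for $g=-v$ together with $\langle v, x - x^{k+1}\rangle \leq 0$ for all $x \in \Omega_k$ (Definition \ref{normal}), correctly promotes the stationarity condition to global minimality over $\Omega_k$ --- which is precisely why, for merely quasiconvex $F$, the inclusion (\ref{subdiferencial}) is a strictly weaker requirement, as the remark asserts. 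Two details you handled that the paper glosses over: strong convexity of $\phi_k$ (from $\alpha_k\langle e_k, z_k\rangle > 0$) justifies the equality sign in $x^{k+1} = \textrm{argmin}$, i.e.\ uniqueness of the minimizer; and since Proposition \ref{mordukhovich} is stated for limiting subdifferentials, your chain tacitly identifies the limiting objects with the convex ones (convex subdifferential of $\phi_k$, convex normal cone of the convex set $\Omega_k$) --- a standard fact for convex data, and the same tacit identification the paper itself makes in Theorem \ref{existe}, so this is not a gap.
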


\subsection{Existence of the iterates}

\begin{teorema}
 Let $F:\mathbb{R}^n\longrightarrow \mathbb{R}^m $ be a function satisfying $\bf (H_1), \bf (H_2)$  and $ 0 \prec F$ .  Then the sequence $\left\{x^k\right\}$, generated by the SPPM algorithm, given by $(\ref{inicio})$ and $(\ref{subdiferencial}),$ is well defined.
 \label{existe}
\end{teorema}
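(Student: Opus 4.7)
The plan is to realize each iterate as a minimizer of an auxiliary scalar problem and read off the inclusion in $(\ref{subdiferencial})$ from Fermat's rule. Fix $k$ and set
$$
\varphi_k(x) := \langle F(x), z_k\rangle + \frac{\alpha_k}{2}\langle e_k, z_k\rangle \|x - x^k\|^2,
\qquad
\psi_k := \varphi_k + \delta_{\Omega_k},
$$
where $\delta_{\Omega_k}$ is the indicator function of $\Omega_k$. Assume by induction that $x^k \in \mathbb{R}^n$ has been constructed; I will exhibit a point $x^{k+1} \in \Omega_k$ satisfying $(\ref{subdiferencial})$.

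First I would verify that $\Omega_k$ is nonempty, closed, and convex. It is nonempty because $x^k \in \Omega_k$; it is closed because each $F_i$ is continuous (being locally Lipschitz by $(H_1)$ and the componentwise definition), so $\Omega_k = \bigcap_{i=1}^{m}\{x : F_i(x) \leq F_i(x^k)\}$ is a finite intersection of closed sets; and it is convex because $(H_2)$ together with Luc's characterization makes each $F_i$ quasiconvex, so every lower level set $\{x : F_i(x) \leq F_i(x^k)\}$ is convex. Next, since $F$ is locally Lipschitz the function $\varphi_k$ is locally Lipschitz, and $\psi_k$ is proper and lower semicontinuous. I would check coercivity of $\psi_k$ using the hypothesis $0 \prec F$: this gives $F_i(x) > 0$ for every $x$ and every $i$, and since $z_k \in \mathbb{R}^m_+$, we get $\langle F(x), z_k\rangle \geq 0$ for all $x$. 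Because $e_k \in \mathbb{R}^m_{++}$ and $z_k \in \mathbb{R}^m_+\setminus\{0\}$ with $\|z_k\|=1$, the coefficient $\langle e_k, z_k\rangle$ is strictly positive, hence the quadratic term forces $\psi_k(x) \to +\infty$ as $\|x\| \to +\infty$.

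Applying Proposition \ref{coercivaesemicont} to $\psi_k$ yields a minimizer $x^{k+1} \in \mathrm{argmin}\,\psi_k$, which automatically lies in $\Omega_k$. By Fermat's rule (Proposition \ref{otimo}) applied to $\psi_k$, $0 \in \hat{\partial}\psi_k(x^{k+1})$, and Proposition \ref{Rockwets1} upgrades this to $0 \in \partial \psi_k(x^{k+1})$. Since $\varphi_k$ is locally Lipschitz at $x^{k+1}$ and $\delta_{\Omega_k}$ is lower semicontinuous there, the sum rule of Proposition \ref{mordukhovich} yields
$$
0 \in \partial\psi_k(x^{k+1}) \subset \partial\varphi_k(x^{k+1}) + \partial\delta_{\Omega_k}(x^{k+1}).
$$
Because $\Omega_k$ is closed and convex, $\partial\delta_{\Omega_k}(x^{k+1})$ coincides with the normal cone $\mathcal{N}_{\Omega_k}(x^{k+1})$ of Definition \ref{normal}, and by Remark \ref{frechetclarke}, $\partial\varphi_k(x^{k+1}) \subset \partial^o\varphi_k(x^{k+1})$. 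Combining these inclusions delivers exactly $(\ref{subdiferencial})$, which completes the inductive step and hence the well-definedness of $\{x^k\}$.

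The main obstacle is the coercivity step: without the assumption $0 \prec F$, the scalarization $\langle F(\cdot), z_k\rangle$ could drift to $-\infty$ faster than the quadratic term grows, and the existence of an argmin in $\Omega_k$ would fail. Everything else is a routine assembly of the subdifferential calculus (Fermat's rule, sum rule, the inclusion $\hat{\partial}\subset\partial\subset\partial^o$) on top of this lower bound.
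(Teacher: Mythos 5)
Your proposal is correct and follows essentially the same route as the paper: both realize $x^{k+1}$ as a global minimizer of the scalarized function plus the indicator of $\Omega_k$ (existence via lower boundedness from $0 \prec F$, coercivity of the quadratic term, and Proposition \ref{coercivaesemicont}), then chain Fermat's rule (Proposition \ref{otimo}), the inclusion $\hat{\partial}\subset\partial$ (Proposition \ref{Rockwets1}), the sum rule (Proposition \ref{mordukhovich}), and $\partial\subset\partial^o$ (Remark \ref{frechetclarke}) to obtain $(\ref{subdiferencial})$. Your write-up merely makes explicit some details the paper leaves implicit, such as the closedness and convexity of $\Omega_k$ and the identification of $\partial\delta_{\Omega_k}$ with $\mathcal{N}_{\Omega_k}$.
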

\begin{proof} We proceed by induction. It holds for $k=0,$ due to (\ref{inicio}). Assume that $x^k$ exists and define $\varphi_k(x)=\left\langle F(x), z_k\right\rangle + \frac{\alpha_k}{2}\left\langle e_k , z_k\right\rangle\left\|x - x^k\right\|^2 +\delta_{\Omega_k}(x)$, where $\delta_{\Omega_k}(.)$ is the indicator function of ${\Omega_k}$.  Then we have that min$\{\varphi_k(x): x \in \mathbb{R}^n\}$ is equivalent to min$\{\left\langle F(x), z_k\right\rangle + \frac{\alpha_k}{2}\left\langle e_k , z_k\right\rangle\left\|x - x^k\right\|^2: x \in \Omega_k\}$.  Due that $ 0 \prec F$ and $z_k \in \mathbb{R}^m_+\backslash \left\{0\right\}$ the function $\left\langle F(.), z_k\right\rangle$ is bounded from below. Then, by the lower boundedness and continuity of the function $\left\langle F(.), z_k\right\rangle$, as also, by the continuity and coercivity of  $||.-x^k||^2,$ and using Proposition \ref{coercivaesemicont}, we obtain that there exists $x^{k+1} \in \Omega_k$ which is a global minimum of $\varphi_k(.).$ From Proposition $\ref{otimo}$, $x^{k+1}$ satisfies
 $ 0 \in \hat{ \partial}\left( \left\langle F(.), z_k\right\rangle  + \dfrac{\alpha_k}{2}\left\langle e_k , z_k\right\rangle \Vert\ .\  - x^k \Vert ^2 + \delta_{\Omega_k}(.)\right) (x^{k+1})$ and by Proposition $\ref{Rockwets1}$ and Proposition $\ref{mordukhovich}$ , we have that
\begin{eqnarray}
0 \in  \partial\left( \left\langle F(.), z_k\right\rangle  + \dfrac{\alpha_k}{2}\left\langle e_k , z_k\right\rangle \Vert\ .\  - x^k \Vert ^2\right) (x^{k+1}) +  \mathcal{N}_{\Omega_k}(x^{k+1}).
\label{cone}
\end{eqnarray}
 From Remark $\ref{frechetclarke}$, the iteration $(\ref{subdiferencial})$ is obtained of $(\ref{cone})$.
\end{proof}
\begin{obs}{\bf (Huang and Yang \cite{Huang})} Without loss of generality, always we can assume that the function $F:\mathbb{R}^n\longrightarrow \mathbb{R}^m$ satisfies $0 \prec F.$ Of fact,  consider the following multiobjective optimization problem
\begin{center}
$(P^{'})\ \   \textnormal{min}\left\{e^{F(x)}:x\in\mathbb{R}^n\right\}$
\end{center}
Observe that both, (\ref{pom}) and $(P^{'})$, have the same set of Pareto solutions, weak Pareto solutions and Pareto-Clarke critical points. Furthermore, if $F$ is $\mathbb{R}^m_+$ - quasiconvex and locally Lipschitz on $\mathbb{R}^n,$ then $e^{F(x)}$ is also $\mathbb{R}^m_+$ - quasiconvex and locally Lipschitz on $\mathbb{R}^n$.  Therefore, along this paper and from now on we implicitly assume that $0 \prec F.$
\label{limitada}
\end{obs}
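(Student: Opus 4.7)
The plan is to unpack the remark as four separate claims and verify each using the fact that $\phi(t)=e^{t}$ is a $C^{\infty}$, strictly increasing bijection from $\mathbb{R}$ onto $(0,+\infty)$ with $\phi'(t)=e^{t}>0$. I would begin by stating once and for all that, since $\phi$ is strictly increasing componentwise, the equivalences $F_i(x)<F_i(x^{*}) \Longleftrightarrow e^{F_i(x)}<e^{F_i(x^{*})}$ and $F_i(x)\le F_i(x^{*}) \Longleftrightarrow e^{F_i(x)}\le e^{F_i(x^{*})}$ hold for every $x,x^{*}\in\mathbb{R}^n$ and every $i\in\{1,\dots,m\}$.

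From those two componentwise equivalences, the coincidence of the Pareto and weak Pareto solution sets follows directly by reading the Definitions of Pareto and weak Pareto solution: the negation of the defining condition for $F$ (existence of $x$ producing the required strict/non-strict inequalities in the components) is literally equivalent to the negation of the defining condition for $e^{F}$. For the quasiconvexity claim, I would apply $\phi$ to the inequality $F_i(tx+(1-t)y)\le \max\{F_i(x),F_i(y)\}$ and use monotonicity of $\phi$ and $\phi(\max\{a,b\})=\max\{\phi(a),\phi(b)\}$ to obtain $e^{F_i(tx+(1-t)y)}\le \max\{e^{F_i(x)},e^{F_i(y)}\}$, giving quasiconvexity of each component and hence the $\mathbb{R}^m_+$-quasiconvexity of $e^{F}$ via the characterization stated earlier in the excerpt. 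For the locally Lipschitz property, given $x\in\mathbb{R}^n$ pick a neighborhood $B(x,\varepsilon_x)$ on which $F_i$ is $L_x$-Lipschitz, note that $F_i(B(x,\varepsilon_x))$ is contained in a bounded interval $[a,b]$ (by continuity), and then $\phi$ is Lipschitz on $[a,b]$ with constant $e^{b}$, so that $|e^{F_i(y)}-e^{F_i(z)}|\le e^{b}L_x\|y-z\|$ for $y,z\in B(x,\varepsilon_x)$.

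The subtlest point is the preservation of Pareto--Clarke critical points, and this is where I expect to spend most of the work. The key is the Clarke chain rule applied to the composition $e^{F_i}=\phi\circ F_i$ with $\phi\in C^{1}$ and $\phi'>0$: for every $x\in\mathbb{R}^n$ and every direction $d\in\mathbb{R}^n$ one has
\begin{equation*}
(e^{F_i})^{o}(x,d)=e^{F_i(x)}\, F_i^{o}(x,d).
\end{equation*}
If I am allowed to quote Clarke's chain rule this is immediate; otherwise I would derive it by hand from Definition \ref{subclarke1}, writing the difference quotient as
\begin{equation*}
\frac{e^{F_i(y+td)}-e^{F_i(y)}}{t}=e^{F_i(y)+\theta_{t,y}}\cdot\frac{F_i(y+td)-F_i(y)}{t}
\end{equation*}
with $\theta_{t,y}$ between $0$ and $F_i(y+td)-F_i(y)$, which tends to $0$ as $t\downarrow 0$ and $y\to x$ by continuity of $F_i$, and then taking $\limsup$. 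Since $e^{F_i(x)}>0$, the equality $(e^{F_i})^{o}(x,d)=e^{F_i(x)} F_i^{o}(x,d)$ implies $(e^{F_i})^{o}(x,d)\ge 0 \Longleftrightarrow F_i^{o}(x,d)\ge 0$. Consequently, $x^{*}$ satisfies the Pareto--Clarke critical point condition of Definition \ref{paretoclarke} for $F$ if and only if it does for $e^{F}$, finishing the proof.
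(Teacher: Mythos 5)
Your proposal is correct. Note, however, that the paper offers no internal proof to compare against: the remark's justification is delegated entirely to the citation of Huang and Yang, with the three preservation claims simply asserted. Your verification is the natural one and it is complete: the monotonicity of $t\mapsto e^{t}$ handles Pareto and weak Pareto solutions directly from the definitions, the identity $e^{\max\{a,b\}}=\max\{e^{a},e^{b}\}$ handles componentwise quasiconvexity, and local boundedness of $F_i$ plus the Lipschitz constant of $e^{t}$ on a bounded interval handles the locally Lipschitz property. The genuine mathematical content you add beyond what the paper states is the chain-rule identity $(e^{F_i})^{o}(x,d)=e^{F_i(x)}F_i^{o}(x,d)$, which is exactly what is needed to make the equivalence of Pareto--Clarke critical points (Definition \ref{paretoclarke}) precise, and which the paper never writes down; your hand derivation via the mean value theorem is sound. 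The only step you should make explicit is the final exchange $\limsup\left(c_{t,y}\,q_{t,y}\right)=\left(\lim c_{t,y}\right)\limsup q_{t,y}$: this is legitimate here because $c_{t,y}=e^{F_i(y)+\theta_{t,y}}$ converges to the strictly positive constant $e^{F_i(x)}$ and the difference quotients $q_{t,y}$ of $F_i$ are bounded near $x$ by the local Lipschitz constant, but as written it is used silently.
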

\begin{obs}
\label{interiornovacio}
We are interest in the asymptotic convergence of the (SPPM) algorithm, so we also assume along this paper that in each iteration $x^k$ is not a Pareto-Clarke critical point and $x^{k+1}\neq x^k.$ This implies, from Remark \ref{Descensoo} that the interior of $\Omega_{k+1},$ denoted by $\Omega_{k+1}^0,$ is nonempty.

When the condition $x^{k+1}=x^k$ is not satisfied, that is, if there exists $k_0$ such that $x^{k_0+1}=x^{k_0}$ then it is easy to prove that  this point is a Pareto-Clarke critical point of $F.$
\end{obs}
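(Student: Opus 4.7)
The remark asserts two facts about the sequence $\{x^k\}$ generated by SPPM. First, under the standing assumption that $x^k$ fails to be a Pareto--Clarke critical point for every $k$, the interior $\Omega_{k+1}^o$ is nonempty. Second, if there exists $k_0$ with $x^{k_0+1}=x^{k_0}$, then $x^{k_0}$ must be a Pareto--Clarke critical point of $F$. The plan is to treat the two parts separately, but to use the same tool in each: Remark \ref{Descensoo}, which converts the failure of the Pareto--Clarke condition at a point into an honest descent direction for every component $F_i$.

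For the first claim, I would apply Remark \ref{Descensoo} at the point $x^{k+1}$, which by the standing assumption is not Pareto--Clarke critical. This yields a direction $d\in\mathbb{R}^n$ and an $\varepsilon>0$ such that $F_i(x^{k+1}+td)<F_i(x^{k+1})$ for every $i\in\{1,\ldots,m\}$ and every $t\in(0,\varepsilon]$. Fixing any $t_0\in(0,\varepsilon]$ and setting $y_0:=x^{k+1}+t_0 d$, the strict componentwise inequalities together with the continuity of each $F_i$ (granted by $\mathbf{(H_1)}$) furnish an open ball $B(y_0,\delta)$ on which $F_i(y)<F_i(x^{k+1})$ continues to hold for all $i$. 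Hence $B(y_0,\delta)\subset\Omega_{k+1}$, proving $\Omega_{k+1}^o\neq\emptyset$.

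For the second claim, I would specialize the optimality inclusion (\ref{subdiferencial}) at $k=k_0$ using $x^{k_0+1}=x^{k_0}$. The Clarke subdifferential of the quadratic regularizer $\frac{\alpha_{k_0}}{2}\langle e_{k_0},z_{k_0}\rangle\|\cdot-x^{k_0}\|^2$ at its minimizer $x^{k_0}$ reduces to $\{0\}$, so via Lemma \ref{subcontido} the inclusion collapses to $0\in\partial^o\langle F(\cdot),z_{k_0}\rangle(x^{k_0})+\mathcal{N}_{\Omega_{k_0}}(x^{k_0})$. Suppose, for contradiction, that $x^{k_0}$ is not Pareto--Clarke critical. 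Remark \ref{Descensoo} then supplies a direction $d$ with $F_i^o(x^{k_0},d)<0$ for every $i$ and some $\varepsilon>0$ such that $x^{k_0}+td\in\Omega_{k_0}$ for $t\in(0,\varepsilon]$. Writing $0=g+\eta$ with $g\in\partial^o\langle F(\cdot),z_{k_0}\rangle(x^{k_0})$ and $\eta\in\mathcal{N}_{\Omega_{k_0}}(x^{k_0})$, Definition \ref{subclarke1} and Lemma \ref{algebra}(i)--(ii) applied iteratively to $\sum_i z_{k_0,i}F_i$ give $\langle g,d\rangle\leq\sum_i z_{k_0,i}F_i^o(x^{k_0},d)<0$, where strict negativity uses $z_{k_0}\in\mathbb{R}^m_+\setminus\{0\}$ together with $F_i^o(x^{k_0},d)<0$ for every $i$. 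Simultaneously, since $x^{k_0}+td\in\Omega_{k_0}$, Definition \ref{normal} gives $\langle\eta,d\rangle\leq 0$. Adding these two inequalities yields $\langle g+\eta,d\rangle<0$, contradicting $g+\eta=0$, and therefore $x^{k_0}$ must be Pareto--Clarke critical.

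The main technical subtlety I anticipate lies in the second part: one must carefully justify the reduction of (\ref{subdiferencial}) when $x^{k_0+1}=x^{k_0}$, namely that upon cancellation of the quadratic term the residual Clarke information is entirely carried by $\partial^o\langle F(\cdot),z_{k_0}\rangle(x^{k_0})$ and $\mathcal{N}_{\Omega_{k_0}}(x^{k_0})$, and that the directional-derivative estimate for a nonnegative linear combination of locally Lipschitz functions (via Lemma \ref{algebra}) is sharp enough to extract the strict inequality $\langle g,d\rangle<0$ from $z_{k_0}\neq 0$ alone. Once this chain of subdifferential calculus is laid out, the contradiction argument itself is straightforward.
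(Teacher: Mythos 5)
Your proof is correct and follows the route the paper itself indicates: the remark is left unproven in the paper (it only points to Remark \ref{Descensoo} and says the fixed-point case is ``easy to prove''), and your first part is exactly the intended argument --- a descent direction at the non-critical iterate $x^{k+1}$ plus continuity of the finitely many $F_i$ yields an open ball contained in $\Omega_{k+1}$. Your second part --- specializing the inclusion (\ref{subdiferencial}) at $x^{k_0+1}=x^{k_0}$, noting the quadratic term has zero gradient there, and reaching a contradiction with a descent direction via Lemma \ref{subcontido}, Lemma \ref{algebra}, Definition \ref{normal} and $z_{k_0}\in\mathbb{R}^m_+\setminus\{0\}$ --- is precisely the subdifferential-calculus machinery the paper deploys in its proof of Proposition \ref{prop1}, correctly specialized to the stationary case.
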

\subsection{Weak Convergence}
In this subsection we prove, under the assumption that the consecutive iterations converges to zero, that any cluster point is a Pareto-Clarke critical point of the problem $(\ref{pom}).$
\begin{proposicao}
 Let $F:\mathbb{R}^n\longrightarrow \mathbb{R}^m $ be a function satisfying $\bf (H_1)$ and $\bf (H_2).$  If $0 < \alpha_k < \tilde{\alpha}$, with $\tilde{\alpha}>0,$ and the sequence $\lbrace x^k\rbrace$  generated by the SPPM algorithm, $(\ref{inicio})$ and $(\ref{subdiferencial}),$ satisfies 
\begin{equation}
\label{succonsecutivas}
\lim_{k\rightarrow +\infty}||x^{k+1}-x^{k}||=0,
\end{equation}
and has a cluster point, then it is a Pareto-Clarke critical point of the problem $(\ref{pom})$.
\label{prop1}
\end{proposicao}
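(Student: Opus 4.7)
The plan is to suppose by contradiction that the cluster point $\hat x$ is \emph{not} Pareto-Clarke critical and derive an impossibility by passing to the limit in the variational inequality that (\ref{subdiferencial}) provides.

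First I would unpack (\ref{subdiferencial}). Because the regularizer $\frac{\alpha_k}{2}\langle e_k, z_k\rangle\|\cdot - x^k\|^2$ is $C^1$ with gradient $\alpha_k \langle e_k, z_k\rangle(x^{k+1}-x^k)$ at $x^{k+1}$, the sum rule of Lemma \ref{subcontido} produces $g_{k+1}\in \partial^o\langle F(\cdot), z_k\rangle(x^{k+1})$ and $\eta_{k+1}\in \mathcal{N}_{\Omega_k}(x^{k+1})$ with
\[
g_{k+1} + \alpha_k\langle e_k, z_k\rangle(x^{k+1}-x^k) + \eta_{k+1} = 0,
\]
together with the further decomposition $g_{k+1}= \sum_{i=1}^m (z_k)_i g^i_{k+1}$, $g^i_{k+1}\in \partial^o F_i(x^{k+1})$, obtained by applying Lemma \ref{subcontido} and Lemma \ref{escalar} to $\langle F(\cdot), z_k\rangle = \sum_i (z_k)_i F_i$. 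Definition \ref{normal} converts this into the variational inequality
\[
\langle g_{k+1} + \alpha_k\langle e_k, z_k\rangle(x^{k+1}-x^k),\, x - x^{k+1}\rangle \geq 0,\qquad \forall\, x\in \Omega_k.
\]

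Next, under the contradiction hypothesis, Definition \ref{paretoclarke} furnishes $d\in \mathbb{R}^n$ with $F_i^o(\hat x, d) < 0$ for every $i$. I pick a subsequence $x^{k_j}\to \hat x$; the hypothesis (\ref{succonsecutivas}) then forces $x^{k_j+1}\to \hat x$ as well. Since $\{z_{k_j}\}$ lies in the compact set $\mathbb{R}^m_+\cap\{\|\cdot\|=1\}$, I refine once more so that $z_{k_j}\to \bar z$ with $\bar z\in \mathbb{R}^m_+$ and $\|\bar z\|=1$ (so $\bar z \neq 0$). Upper semicontinuity of the Clarke derivative (Proposition \ref{limsup}) yields $F_i^o(x^{k_j+1}, d) < 0$ for every $i$ and all $j$ sufficiently large, and the very definition of $F_i^o$ as a $\limsup$ produces $t_j > 0$ such that $F_i(x^{k_j+1} + t_j d) < F_i(x^{k_j+1}) \leq F_i(x^{k_j})$ for every $i$; equivalently, $x^{k_j+1} + t_j d \in \Omega_{k_j}$.

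Finally, inserting $x = x^{k_j+1} + t_j d$ into the variational inequality and dividing by $t_j > 0$ gives
\[
\langle g_{k_j+1}, d\rangle + \alpha_{k_j}\langle e_{k_j}, z_{k_j}\rangle\langle x^{k_j+1} - x^{k_j}, d\rangle \geq 0.
\]
Combining the decomposition of $g_{k_j+1}$ with $\langle g^i_{k_j+1}, d\rangle \leq F_i^o(x^{k_j+1}, d)$ (Definition \ref{subclarke1}) yields $\langle g_{k_j+1}, d\rangle \leq \sum_i (z_{k_j})_i F_i^o(x^{k_j+1}, d)$; taking $\limsup$ and invoking Proposition \ref{limsup} together with $z_{k_j}\to \bar z$ produces $\limsup_j \langle g_{k_j+1}, d\rangle \leq \sum_i \bar z_i F_i^o(\hat x, d) < 0$, strict because $\bar z\neq 0$ and every $F_i^o(\hat x, d) < 0$. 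Meanwhile the cross term tends to zero because $\alpha_{k_j} < \tilde\alpha$, $|\langle e_{k_j}, z_{k_j}\rangle| \leq 1$, and $\|x^{k_j+1} - x^{k_j}\|\to 0$. Therefore $0 \leq \limsup_j \langle g_{k_j+1}, d\rangle < 0$, the desired contradiction. The delicate step I expect to be the main obstacle is threading upper semicontinuity of $F_i^o$ through the sum-rule decomposition while ensuring the limit weight $\bar z$ is nonzero, so that the strict negativity survives the passage to the limit.
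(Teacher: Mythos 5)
Your proof is correct, and it follows the same overall strategy as the paper's: argue by contradiction, decompose the inclusion (\ref{subdiferencial}) via Lemmas \ref{subcontido} and \ref{escalar} into nonnegative combinations of Clarke subgradients of the $F_i$ plus a normal-cone element, test the resulting inequality against a feasible point displaced from $x^{k_j+1}$ roughly in the direction $d$, and pass to the limit with Proposition \ref{limsup}, using $\bar z\in\mathbb{R}^m_+$, $\|\bar z\|=1$ to preserve strict negativity. The one genuine difference is the choice of test point, and it changes the structure. The paper first proves that the cluster point satisfies $F(\widehat x) \preceq F(x^k)$ for all $k$ (via monotonicity of $\langle F(x^k),z\rangle$ and continuity of $\langle F(\cdot),z\rangle$), so that the \emph{fixed} point $\widehat x + \lambda d$, which improves on $\widehat x$ by Remark \ref{Descensoo}, lies in every $\Omega_k$; the tested direction $\widehat x + \lambda d - x^{k_j+1}$ then varies with $j$, and Proposition \ref{limsup} is invoked jointly in $(x,d)$. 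You instead test at $x^{k_j+1} + t_j d$, whose membership in $\Omega_{k_j}$ you obtain by transferring the strict negativity of $F_i^o(\cdot,d)$ from $\widehat x$ to $x^{k_j+1}$ via upper semicontinuity (finitely many indices $i$ guarantee a common $t_j>0$), and after dividing by $t_j$ the tested direction is the fixed vector $d$. Your route spares the paper's entire preliminary paragraph establishing $F(\widehat x)\preceq F(x^k)$; its price is the extra usc argument producing the $t_j$, which is sound since, by Definition \ref{subclarke1} with $y=x^{k_j+1}$ fixed, $F_i^o(x^{k_j+1},d)<0$ forces $F_i(x^{k_j+1}+td)<F_i(x^{k_j+1})$ for all small $t>0$. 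Both arguments use the boundedness $\alpha_k<\tilde\alpha$, $\left|\left\langle e_k,z_k\right\rangle\right|\leq 1$ and hypothesis (\ref{succonsecutivas}) only to annihilate the proximal cross term, and both conclude from $0\leq \sum_{i=1}^m \bar z_i F_i^o(\widehat x,d)$ with some $\bar z_{i_0}>0$; your version is, if anything, slightly more economical.
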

\begin{proof}
By assumption, there exists a convergent subsequence $\left\{x^{k_j}\right\}$ of $\left\{x^{k}\right\}$ whose limit is some $\widehat{x} \in \mathbb{R}^n$. Since $F$ is locally Lipschitz on $\mathbb{R}^n$, then the function  $\left\langle F(.), z\right\rangle$ is also locally Lipschitz on $\mathbb{R}^n$ and so, continuos for all $z \in \mathbb{R}^m$, in particular, for all $z \in \mathbb{R}^m_+ \backslash \left\{0\right\}$ , and
$\lim \limits_{j\rightarrow +\infty}\left\langle F(x^{k_j}) , z\right\rangle = \left\langle F(\widehat x),z\right\rangle$.  On the other hand, as $x^{k+1} \in \Omega_k$, we have $F(x^{k+1}) \preceq F(x^{k})$ and since $z \in \mathbb{R}^m_+ \backslash \left\{0\right\}$, we conclude that the sequence $\left\{\left\langle F(x^k),z\right\rangle\right\}$ is convergent to $\left\langle F(\widehat x),z\right\rangle$ because it is nonincreasing and admits a subsequence converging to $\left\langle F(\widehat x),z\right\rangle$. So {\small
$\lim \limits_{k\rightarrow +\infty}\left\langle F(x^{k}) , z\right\rangle = \left\langle F(\widehat {x}),z\right\rangle = inf_{k\in \mathbb{N}}\left\{\left\langle F(x^k),z\right\rangle\right\} \leq  \left\langle F(x^k),z\right\rangle$}. Thus,
$\left\langle F(x^k)-F(\widehat{x}),z\right\rangle \geq  0, \forall \ k \in \mathbb{N}, \forall \ z \in \mathbb{R}^m_+ \backslash \left\{0\right\}$. We conclude that $F(x^k) - F(\widehat{x}) \in \mathbb{R}^m_+$, i.e, $F(\widehat{x})\preceq F(x^k), \forall \ k  \in \mathbb{N}$. This implies that $\widehat{x} \in \Omega_{k}$.\\
Assume, to arrive at a contradiction, that $\widehat{x}$ is not Pareto-Clarke critical point in $\mathbb{R}^n$, then there exists a direction $d \in \mathbb{R}^n$ such that
\begin{eqnarray}
F_{i}^o(\widehat{x}, d)  < 0, \forall \ i \in \left\{1,...,m\right\}
\label{desce}
\end{eqnarray}
Therefore $d$ is a descent direction for the multiobjective function $F$ in $\widehat{x}$, so, $\exists \ \varepsilon > 0$ such that
$F(\widehat{x} + \lambda d) \prec F(\widehat{x}),\ \forall \ \lambda \in (0, \varepsilon].$ Thus, $ \widehat{x} + \lambda d \in \Omega_{k}$.\\
On the other hand,  as $\left\{x^{k}\right\}$ is generated by {\bf SPPM} algorithm, from Theorem $\ref{existe}$, $(\ref{subdiferencial})$,  Lemma $\ref{subcontido}$ and from Lemma $\ref{escalar}$, this implies that there exists $\beta_k (x^{k} - x^{k+1}) - v_k\in \partial ^ o \left( \left\langle F(.), z_k\right\rangle\right) (x^{k+1})$, with  $v_k \in \mathcal{N}_{\Omega_k}(x^{k+1})$ and
$\beta_k =\alpha_k\left\langle e_k , z_k\right\rangle > 0$, such that
\begin{equation}
\beta_k \langle x^k - x^{k+1} , p \rangle  - \langle v_k , p \rangle \leq \langle F(.), z_k \rangle ^ o (x^{k+1},p), \forall p\in \mathbb{R}^n
\label{subclarke4}
\end{equation}
Consider $p = (\widehat{x} + \lambda d) - x^{k+1}$ and as  $v_k \in \mathcal{N}_{\Omega_k}(x^{k+1})$, from $(\ref{subclarke4})$ we have
\begin{equation}
\beta_k \langle x^k - x^{k+1} , \widehat{x} + \lambda d - x^{k+1} \rangle  \leq \langle F(.), z_k \rangle ^ o (x^{k+1},\widehat{x} + \lambda d - x^{k+1})
\label{subclarke3}
\end{equation}
As $\left\{z_k\right\}$ is bounded, then there exists a subsequence denoted also, without loss of generality, by $\left\{z^{k_j}\right\}$ such that $\lim \limits_{j\rightarrow +\infty}z^{k_j}=\bar{z}$, with $\bar{z} \in \mathbb{R}^m_+\backslash \left\{0\right\}$.  From $(\ref{subclarke3})$, we have:
$$
\beta_{k_j} \langle x^{k_j} - x^{{k_j}+1} , \widehat{x} + \lambda d - x^{{k_j}+1} \rangle \leq \langle F(.), z_{k_j} \rangle ^ o (x^{{k_j}+1}, \widehat{x} + \lambda d - x^{{k_j}+1})
$$
Lemma $\ref{algebra}$, $(i)$ and $(ii)$, we have:
$$
\beta_{k_j} \langle x^{k_j} - x^{{k_j}+1} , \widehat{x} + \lambda d - x^{{k_j}+1} \rangle \leq \sum\limits_{i=1}^m z_{k_j}^iF_i^0(x^{{k_j}+1}, \widehat{x} + \lambda d - x^{{k_j}+1}),
$$
where $z_{k_j}^i$ are the components of the vector $z_{k_j}.$ Then using Lemma $\ref{algebra}$, $(iii),$ we obtain:
$$
\beta_{k_j} \langle x^{k_j} - x^{{k_j}+1} , \widehat{x} + \lambda d - x^{{k_j}+1} \rangle \leq \sum\limits_{i=1}^m F_i^0\left(x^{{k_j}+1}, z_{k_j}^i(\widehat{x} + \lambda d - x^{{k_j}+1})\right),
$$
Taking  lim sup in the above inequality, using the condition (\ref{succonsecutivas}), Proposition $\ref{limsup}$ and as $\lambda > 0$, we conclude that
\begin{equation}
0\leq F_1^o(\widehat{x}, d)\bar{z}_1 + ... + F_m^o(\widehat{x}, d)\bar{z}_m
\label{somatorio1}
\end{equation}
Without loss of generality, consider the set $J = \left\{i \in I: \bar{z}_i > 0 \right\}$, where $I = \left\{1,...,m\right\}$.  Thus, from $(\ref{somatorio1})$, there exists $i_0 \in J$ such that
$F_{i_0}^o(\widehat{x}, d)\bar{z}_{i_0} \geq 0$  contradicting $(\ref{desce})$.
\end{proof}


\subsection{Global Convergence}

For this subsection we make the following assumption on the function $F$ and the initial point $x^0$ :
 \begin{enumerate}
\item [${\bf (H_3)}$] The set $\left(F(x^0) - \mathbb{R}^m_+\right)\cap F(\mathbb{R}^n)$ is $\mathbb{R}^m_+$ - complete,  meaning that for all sequences $\left\{a_k\right\}\subset\mathbb{R}^n$, with $a_0 = x^0$, such that $F(a_{k+1}) \preceq F(a_k)$, there exists  $ a \in \mathbb{R}^n$ such that $F(a)\preceq F(a_k), \ \forall \ k \in \mathbb{N}$.
\end{enumerate}
\begin{obs}
The assumption ${\bf (H_3)}$  is cited in various works on proximal point method for convex functions, see Bonnel et al. \cite{Iusem}, Ceng  and Yao \cite{Ceng} and Villacorta and Oliveira \cite{Villacorta}.
\end{obs}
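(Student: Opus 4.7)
The statement is a bibliographic remark rather than a mathematical proposition, so no proof in the logical sense is called for; what is needed is a verification that each of the three cited works invokes a hypothesis equivalent to $\bf (H_3)$. The plan is therefore to locate, in each reference, the $\mathbb{R}^m_+$-completeness condition on the image sublevel set (or its obvious analogue) and record where it is used.

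First I would consult Bonnel, Iusem and Svaiter \cite{Iusem}, which is the direct convex predecessor of the method proposed here. The hypothesis should appear in or just before the statement of their main convergence theorem, since $\mathbb{R}^m_+$-completeness is precisely what one needs to extract a Pareto-dominating accumulation point from a $\preceq$-monotone iterate-image sequence. I would record the label under which they state it and verify that, modulo notation (their problem may be posed on a closed convex set rather than on all of $\mathbb{R}^n$), the hypothesis reduces to $\bf (H_3)$ in the unconstrained case. Next I would turn to Ceng and Yao \cite{Ceng} and Villacorta and Oliveira \cite{Villacorta}, whose proximal and proximal-distance schemes share the same Fej\'er-convergence skeleton. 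In each I would locate the analogous completeness assumption near the main convergence theorem, cite the exact statement, note any difference in phrasing (for instance, a cone-ordering $G$-completeness rather than the coordinate ordering $\mathbb{R}^m_+$-completeness), and supply a one-line equivalence where needed.

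The only obstacle is bibliographic rather than mathematical: when the cited works state the condition on a constrained domain, one must check that the restriction to the unconstrained setting actually coincides with $\bf (H_3)$, and that replacing a generic proper cone by $\mathbb{R}^m_+$ is innocuous. No argument beyond reading definitions is required, so the ``proof'' amounts to three precise pointers into the literature together with a short justification of the equivalence in each case.
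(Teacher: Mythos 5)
Your reading is correct: the paper offers no proof for this remark at all---it is a purely bibliographic observation, and the references Bonnel et al.\ \cite{Iusem}, Ceng and Yao \cite{Ceng}, and Villacorta and Oliveira \cite{Villacorta} do each impose the analogous $\mathbb{R}^m_+$-completeness (or cone-completeness) hypothesis for their convergence theorems. Your plan of verifying the citations and noting the cone/constraint-set generalizations matches exactly what the remark asserts, so nothing further is needed.
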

As the  sequence $\left\{x^k\right\}$ generated by {\bf SPPM} algorithm, satisfies the assumption ${\bf (H_3)}$ and from assumptions ${\bf (H_1)}$ and ${\bf (H_2)}$ then 
\begin{center}
$E = \left\{x \in \mathbb{R}^n: F\left(x\right)\preceq F\left(x^k\right),\ \ \forall\  k \in \mathbb{N}\right\}$
\end{center}
 is a nonempty closed convex set.
\begin{proposicao}\textnormal(Fejér convergence)\\
Under assumptions ${\bf(H_1)}$, ${\bf(H_2)}$ and ${\bf(H_3)}$, the sequence $\left\{x^k\right\}$ generated by the SPPM algorithm, $(\ref{inicio})$ and $(\ref{subdiferencial})$, is Fejér convergent to $E$.
\label{fejer2}
\end{proposicao}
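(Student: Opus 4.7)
The plan is to fix an arbitrary $x^{*}\in E$ (which is nonempty by $(H_{3})$) and prove the stronger inequality
\begin{equation*}
\|x^{k+1}-x^{*}\|^{2}\;\leq\;\|x^{k}-x^{*}\|^{2}-\|x^{k+1}-x^{k}\|^{2},
\end{equation*}
from which Fejér convergence to $E$ follows. Since $F(x^{*})\preceq F(x^{k})$ for every $k$, the point $x^{*}$ belongs to every $\Omega_{k}$, and moreover $F_{i}(x^{*})\leq F_{i}(x^{k+1})$ for each coordinate $i$, a fact needed below.

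The first step is to extract a usable expression from the optimality inclusion $(\ref{subdiferencial})$. Exactly as in the proof of Proposition \ref{prop1}, Lemma \ref{subcontido} combined with the smoothness of the quadratic term gives
\begin{equation*}
\beta_{k}(x^{k}-x^{k+1})-v_{k}\;\in\;\partial^{o}\langle F(\cdot),z_{k}\rangle(x^{k+1}),
\end{equation*}
where $\beta_{k}=\alpha_{k}\langle e_{k},z_{k}\rangle>0$ and $v_{k}\in\mathcal{N}_{\Omega_{k}}(x^{k+1})$. Evaluating the defining property of the Clarke subdifferential (Definition \ref{subclarke1}) at the direction $d=x^{*}-x^{k+1}$ yields
\begin{equation*}
\beta_{k}\langle x^{k}-x^{k+1},\,x^{*}-x^{k+1}\rangle\;-\;\langle v_{k},\,x^{*}-x^{k+1}\rangle\;\leq\;\langle F(\cdot),z_{k}\rangle^{o}(x^{k+1},\,x^{*}-x^{k+1}).
\end{equation*}

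The heart of the argument is to show the right-hand side is nonpositive. Using Lemma \ref{algebra}(i)-(ii) it is bounded above by $\sum_{i=1}^{m}z_{k}^{i}F_{i}^{o}(x^{k+1},x^{*}-x^{k+1})$, so it suffices to prove $F_{i}^{o}(x^{k+1},x^{*}-x^{k+1})\leq 0$ for every $i$. This is the main obstacle and the place where $\mathbb{R}^{m}_{+}$-quasiconvexity is essential: because each $F_{i}$ is locally Lipschitz and quasiconvex with $F_{i}(x^{*})\leq F_{i}(x^{k+1})$, the contrapositive of Proposition \ref{t11} forces $\langle g,x^{*}-x^{k+1}\rangle\leq 0$ for every $g\in\partial^{o}F_{i}(x^{k+1})$, and taking the supremum of the left-hand side over $g$ (via Definition \ref{subclarke1}, noting that for locally Lipschitz $F_{i}$ the directional derivative $F_{i}^{o}(x^{k+1},\cdot)$ is the support function of $\partial^{o}F_{i}(x^{k+1})$) yields the required bound.

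To finish, note that since $x^{*}\in\Omega_{k}$ and $v_{k}\in\mathcal{N}_{\Omega_{k}}(x^{k+1})$, Definition \ref{normal} gives $\langle v_{k},x^{*}-x^{k+1}\rangle\leq 0$. Plugging all the bounds into the displayed inequality and dividing by $\beta_{k}>0$ produces $\langle x^{k}-x^{k+1},x^{*}-x^{k+1}\rangle\leq 0$. Substituting this into the elementary identity
\begin{equation*}
\|x^{k+1}-x^{*}\|^{2}\;=\;\|x^{k}-x^{*}\|^{2}-\|x^{k+1}-x^{k}\|^{2}+2\langle x^{k}-x^{k+1},\,x^{*}-x^{k+1}\rangle
\end{equation*}
delivers the desired Fejér inequality and completes the proof.
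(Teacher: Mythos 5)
Your proposal follows the same skeleton as the paper's proof (the same subgradient decomposition $\beta_k(x^k-x^{k+1})-v_k\in\partial^o\langle F(\cdot),z_k\rangle(x^{k+1})$, the same elementary norm identity, the same use of the normal cone, and the same appeal to Proposition \ref{t11}), but it has a genuine gap at the decisive step. You claim that $F_i(x^*)\leq F_i(x^{k+1})$ together with ``the contrapositive of Proposition \ref{t11}'' forces $\langle g, x^*-x^{k+1}\rangle\leq 0$ for every $g\in\partial^o F_i(x^{k+1})$. The contrapositive of Proposition \ref{t11} says: if $F_i(x^{k+1}) > F_i(x^*)$ then $\langle g, x^*-x^{k+1}\rangle\leq 0$; it gives nothing when $F_i(x^*)=F_i(x^{k+1})$, and membership of $x^*$ in $E$ only guarantees the non-strict inequality. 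The implication you actually use --- $f$ quasiconvex and locally Lipschitz with $f(\tilde{x})\leq f(x)$ implies $f^o(x,\tilde{x}-x)\leq 0$ --- is false in general: take $f(t)=\min\{t,0\}$ on $\mathbb{R}$, $x=0$, $\tilde{x}=1$. Then $f$ is quasiconvex and Lipschitz, $f(\tilde{x})=f(x)=0$, but $\partial^o f(0)=[0,1]$ and $f^o(0,\tilde{x}-x)=1>0$ (note this is consistent with Proposition \ref{t11}, whose conclusion $f(x)\leq f(\tilde{x})$ holds here). So whenever some component satisfies $F_i(x^*)=F_i(x^{k+1})$ --- a case the definition of $E$ does not exclude --- your bound on the right-hand side is unjustified and can fail. (Your appeal to the support-function property of $f^o$ for locally Lipschitz functions is a standard fact and is not the issue.)

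This equality case is precisely what the paper's proof is engineered to avoid, and it is the only substantive difference between the two arguments. The paper does not test the inequality at $x^*$ directly: invoking Remark \ref{interiornovacio} (which rests on the standing assumption that no iterate is Pareto-Clarke critical, so that $\Omega_{k+1}^0\neq\emptyset$), it chooses points $x^l\in\Omega_{k+1}^0$ with $x^l\rightarrow x^*$; for these points the inequalities $F_i(x^l)<F_i(x^{k+1})$ are strict in every component, so Proposition \ref{t11} does apply and yields $\langle g_i^k, x^{k+1}-x^l\rangle\geq 0$, while $x^l\in\Omega_k$ gives $\langle v_k, x^{k+1}-x^l\rangle\geq 0$; the Fejér inequality at $x^*$ is then recovered by letting $l\rightarrow\infty$ in the norm identity. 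To repair your proof you need this (or an equivalent) approximation-by-strictly-better-points device; as written, the key claim $F_i^o(x^{k+1},x^*-x^{k+1})\leq 0$ does not follow from the results you cite.
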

\begin{proof} From Theorem $\ref{existe}$, $(\ref{subdiferencial})$,  Lemma $\ref{subcontido}$ and from Lemma $\ref{escalar}$ we obtain that
there exist $g_i^k\in \partial^ oF_i (x^{k+1}),i=1,...,m$ such that
$$
0 \in \sum\limits_{i=1}^{m}z_k^i g_i^k + \alpha_k\left\langle e_k , z_k\right\rangle ( x^{k+1} \ - \ x^k )+ \mathcal{N}_{\Omega_k}(x^{k+1})
$$
where $z_k^i$ are the components of $z_k.$ Thus there exist vectors $g_i^k\in \partial^ oF_i (x^{k+1}),i=1,...,m,$ and $v_k \in \mathcal{N}_{\Omega_k}(x^{k+1})$ such that
\begin{equation}
\sum\limits_{i=1}^{m}z_k^i g_i^k = \beta_k(x^k - x^{k+1}) - v_k
\label{gk}
\end{equation}
where $\beta_k = \alpha_k\left\langle e_k , z_k\right\rangle$, $ \forall \ k \in \mathbb{N}$.  Note that  $\beta_k > 0$, because $\alpha_k > 0$, $e_k$ belongs to $\mathbb{R}^m_{++}$, and $z_k$ belongs to  $\mathbb{R}^m_+\backslash \left\{0\right\}$. From $(\ref{gk})$ we have
\begin{equation}
 x^k - x^{k+1} = \dfrac{1}{\beta_k}\left( \sum\limits_{i=1}^{m}z_k^i g_i^k + v_k \right)
 \label{xk}
 \end{equation}
Now take $x^* \in E,$ then by definition of $E$, $x^* \in \Omega_{k+1}$ for all $k,$ and from Remark \ref{interiornovacio}, there exists 
$\{x^l\}\in \Omega_{k+1}^0$ such that $x^l\rightarrow x^*.$ Observe that, $\forall \ x \in \mathbb{R}^n$:
\begin{eqnarray}
 \left\|x^k - x\right\|^2  =  \left\|x^k - x^{k+1}\right\|^2 + \left\|x^{k+1} - x\right\|^2 +
                              2\left\langle x^k - x^{k+1}, x^{k+1} - x\right\rangle.
\label{norma2}
\end{eqnarray}
 Now,combining  $(\ref{norma2})$, with $x = x^l$, and $(\ref{xk})$, we have:
{\footnotesize
\begin{eqnarray}
\left\|x^k - x^l\right\|^2  & = & \left\|x^k - x^{k+1}\right\|^2 + \left\|x^{k+1} - x^l \right\|^2 +
                               \frac{2}{\beta_k} \left(\sum\limits_{i=1}^{m}z_k^i\left\langle  g_i^k ,\  x^{k+1} -x^l\right\rangle + \left\langle v_k\ ,\  x^{k+1} -x^l\right\rangle\right)
\label{desigualdade2}
\end{eqnarray}
}
As $F(x^l)\prec F(x^{k+1}),$ then $F_i(x^l)<F_i(x^{k+1}), \forall i=1,...,m$.  Furthermore, $g^k_i \in \partial^ o F_i(x^{k+1})$ and as $F_i$ is quasiconvex, using Proposition $\ref{t11}$ we have
\begin{equation}
\left\langle g_i^k ,\  x^{k+1} -x^l\right\rangle \geq 0, \forall i=1,...,m.
\label{conclusao}
\end{equation}
Now, as $v_k \in \mathcal{N}_{\Omega_k}(x^{k+1})$, the inequality $(\ref{desigualdade2})$ and $(\ref{conclusao})$, imply, taking $l\rightarrow \infty$
\begin{eqnarray}
 0\leq \left\|x^{k+1} - x^k\right\|^2 \leq \left\|x^k - x^*\right\|^2 - \left\|x^{k+1} - x^*\right\|^2, \forall k \in \mathbb{N}
 \label{desigual}
 \end{eqnarray}
 Thus,
\begin{equation}
\left\|x^{k+1} - x^*\right\| \leq \left\|x^k - x^*\right\|
\label{fejer3}
\end{equation}
\end{proof}
\begin{proposicao}
Under  assumptions ${\bf(H_1)}$, ${\bf(H_2)}$ and ${\bf(H_3)},$ the sequence   $\left\{x^k\right\}$ generates  by the {\bf SPPM} algorithm, $(\ref{inicio})$ and $(\ref{subdiferencial}),$ satisfies
\begin{center}
$\lim  \limits_{k\rightarrow +\infty}\left\|x^{k+1} - x^k\right\| = 0$.
\label{decrescente1}
\end{center}
\end{proposicao}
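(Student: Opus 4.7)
The plan is to exploit inequality $(\ref{desigual})$ from the proof of the preceding Fejér convergence proposition, which already does all the heavy lifting. Specifically, for any fixed $x^* \in E$ we have
$$0\leq \|x^{k+1}-x^k\|^2 \leq \|x^k-x^*\|^2 - \|x^{k+1}-x^*\|^2, \quad \forall k\in\mathbb{N}.$$
The right-hand side is the increment of a nonincreasing, bounded-below sequence, so my first step is to observe that $\{\|x^k-x^*\|\}$ is monotone nonincreasing (already shown via Fejér convergence) and nonnegative, hence convergent in $\mathbb{R}$.

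Next, I would sum the above inequality from $k=0$ to $k=N$, obtaining the telescoping bound
$$\sum_{k=0}^{N}\|x^{k+1}-x^k\|^2 \leq \|x^0-x^*\|^2 - \|x^{N+1}-x^*\|^2 \leq \|x^0-x^*\|^2.$$
Letting $N\to\infty$ yields $\sum_{k=0}^{\infty}\|x^{k+1}-x^k\|^2 < +\infty$, and in particular the general term tends to zero, which gives $\lim_{k\to +\infty}\|x^{k+1}-x^k\|=0$. Alternatively, one can just pass to the limit in the right-hand side of the displayed inequality: since $\|x^k-x^*\|^2 - \|x^{k+1}-x^*\|^2 \to 0$ by convergence of $\{\|x^k-x^*\|\}$, the squeeze theorem immediately gives $\|x^{k+1}-x^k\|^2 \to 0$.

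There is essentially no obstacle here: the Fejér inequality $(\ref{desigual})$ established in Proposition $\ref{fejer2}$ (which requires $E\neq\emptyset$, guaranteed by ${\bf (H_3)}$, so that one can pick $x^*\in E$) already encodes the stronger conclusion that the consecutive differences are square-summable. The only care point is to note that $E$ is nonempty (directly from ${\bf (H_3)}$ applied to the sequence $\{x^k\}$ itself, since $F(x^{k+1})\preceq F(x^k)$ by $x^{k+1}\in\Omega_k$), so that the choice of $x^*\in E$ in the Fejér inequality is legitimate.
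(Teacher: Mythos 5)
Your proposal is correct and follows essentially the same route as the paper: the paper's own proof is exactly your ``alternative'' squeeze argument, namely that $\{\|x^k-x^*\|\}$ is nonnegative and nonincreasing (hence convergent) by the Fejér inequality $(\ref{fejer3})$, so the right-hand side of $(\ref{desigual})$ tends to zero and forces $\|x^{k+1}-x^k\|\to 0$. Your telescoping-sum variant is a harmless strengthening (it additionally yields square-summability of the steps), but it rests on the same key inequality and adds nothing needed for the stated conclusion.
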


\begin{proof}
It follows from $(\ref{fejer3})$, that $ \forall x^* \in E$,  $\left\{\left\|x^k - x^*\right\|\right\}$ is a nonnegative and nonincreasing sequence, and hence is convergent.  Thus, the right-hand side of $(\ref{desigual})$ converges to 0 as $k \rightarrow +\infty$, and the result is obtained.
\end{proof}

\begin{proposicao}
Under assumptions ${\bf(H_1)}$, ${\bf(H_2)}$ and ${\bf(H_3)},$ the sequence  $\left\{x^k\right\}$  generated by the {\bf SPPM} algorithm converges some point of $E$.
\label{acumulacao1}
\end{proposicao}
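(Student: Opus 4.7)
The plan is to combine the Fejér convergence established in Proposition \ref{fejer2} with Lemma \ref{fejerlim1}. First, since the set $E$ is nonempty under assumption $\mathbf{(H_3)}$ and the sequence $\{x^k\}$ is Fejér convergent to $E$ (by Proposition \ref{fejer2}), Lemma \ref{fejerlim1}(i) guarantees that $\{x^k\}$ is bounded. Bolzano--Weierstrass then yields a convergent subsequence $\{x^{k_j}\}$ with limit $\widehat{x} \in \mathbb{R}^n$.

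The main step is to prove that this cluster point $\widehat{x}$ actually belongs to $E$. For this I would reuse the argument already carried out in the proof of Proposition \ref{prop1}: fix any $z \in \mathbb{R}^m_+ \setminus \{0\}$. Since $x^{k+1} \in \Omega_k$, we have $F(x^{k+1}) \preceq F(x^k)$, so the scalar sequence $\{\langle F(x^k),z\rangle\}$ is nonincreasing. The subsequence $\{\langle F(x^{k_j}),z\rangle\}$ converges to $\langle F(\widehat{x}),z\rangle$ by continuity of $\langle F(\cdot),z\rangle$, so the full sequence converges to the same limit, and
\[
\langle F(x^k) - F(\widehat{x}),\, z\rangle \geq 0, \quad \forall\, k \in \mathbb{N},\ \forall\, z \in \mathbb{R}^m_+ \setminus \{0\}.
\]
This forces $F(x^k) - F(\widehat{x}) \in \mathbb{R}^m_+$ for all $k$, i.e., $F(\widehat{x}) \preceq F(x^k)$ for every $k$, which is precisely the condition $\widehat{x} \in E$.

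Having produced a cluster point of $\{x^k\}$ that lies in $E$, the conclusion is immediate: Lemma \ref{fejerlim1}(ii) applied to the Fejér-convergent sequence $\{x^k\}$ and the accumulation point $\widehat{x} \in E$ gives $\lim_{k \to \infty} x^k = \widehat{x}$, completing the proof.

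There is no real obstacle here; the work was already done in establishing Fejér convergence (Proposition \ref{fejer2}) and in the cluster-point analysis inside Proposition \ref{prop1}. This final statement is essentially a packaging result: boundedness plus a single cluster point in $E$, combined with the Fejér property, upgrades subsequential convergence to full convergence.
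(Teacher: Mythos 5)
Your proposal is correct and follows essentially the same route as the paper's own proof: Fejér convergence (Proposition \ref{fejer2}) plus Lemma \ref{fejerlim1}(i) for boundedness, the monotonicity-plus-continuity argument to place the cluster point $\widehat{x}$ in $E$, and Lemma \ref{fejerlim1}(ii) to upgrade to full convergence. The only cosmetic difference is that you justify convergence of $\{\left\langle F(x^k),z\right\rangle\}$ by the fact that a nonincreasing sequence with a convergent subsequence converges to that subsequence's limit, while the paper invokes boundedness below (Remark \ref{limitada}) and monotonicity before identifying the limit; both are valid and lead to the same conclusion.
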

\begin{proof}
From Proposition $\ref{fejer2}$ and Lemma $\ref{fejerlim1}$, $(i)$, $\left\{x^k\right\}$ is bounded, then exists a subsequence $\left\{x^{k_j}\right\}$ such that $\lim  \limits_{j\rightarrow +\infty}x^{k_j} = \widehat{x}$. Since $F$ is locally Lipschitz on $\mathbb{R}^n$, then the function  $\left\langle F(.), z\right\rangle$ is also locally Lipschitz on $\mathbb{R}^n$ and so, continuous for all $z \in \mathbb{R}^m$, in particular, for all $z \in \mathbb{R}^m_+ \backslash \left\{0\right\}$ , and
$\lim \limits_{j\rightarrow +\infty}\left\langle F(x^{k_j}) , z\right\rangle = \left\langle F(\widehat x),z\right\rangle$.  On the other hand, as $x^{k+1} \in \Omega_k$, we have $F(x^{k+1}) \preceq F(x^{k})$ and since $z \in \mathbb{R}^m_+ \backslash \left\{0\right\}$, we conclude $\left\langle F(x^{k+1}) , z\right\rangle \leq \left\langle F(x^{k}) , z\right\rangle $. Furthermore, from Remark $\ref{limitada}$, we can assume  that the function $\left\langle F(.), z\right\rangle$ is bounded below, for each $z \in \mathbb{R}^m_+\backslash \left\{0\right\}$.  Then the sequence $\left\{\left\langle F(x^k),z\right\rangle\right\}$ is nonincreasing and bounded below, hence convergent. So {\small
$\lim \limits_{k\rightarrow +\infty}\left\langle F(x^{k}) , z\right\rangle = \left\langle F(\widehat {x}),z\right\rangle = inf_{k\in \mathbb{N}}\left\{\left\langle F(x^k),z\right\rangle\right\} \leq  \left\langle F(x^k),z\right\rangle$}. Thus,
$\left\langle F(x^k)-F(\widehat{x}),z\right\rangle \geq  0, \forall \ k \in \mathbb{N}, \forall \ z \in \mathbb{R}^m_+ \backslash \left\{0\right\}$. We conclude that $F(x^k) - F(\widehat{x}) \in \mathbb{R}^m_+$, i.e, $F(\widehat{x})\preceq F(x^k), \forall \ k  \in \mathbb{N}$. Thus $\widehat{x}\in E,$ then using Lemma $\ref{fejerlim1}$, $(ii)$, we obtain the result.
\end{proof}\\
Finally, we prove  that the sequence of the iterations converges to a Pareto-Clarke critical point when the sequence of regularization parameters $\lbrace\alpha_k\rbrace$ is bounded.

\begin{teorema}
Consider $F:\mathbb{R}^n\longrightarrow \mathbb{R}^m $ a function satisfying the assumptions $\bf (H_1)$, $\bf (H_2)$ and $\bf (H_3)$.  If $0 < \alpha_k < \tilde{\alpha}$, then the sequence $\lbrace x_k\rbrace$  generated by the SPPM algorithm, $(\ref{inicio})$ and $(\ref{subdiferencial})$, converges to a Pareto-Clarke critical point of the problem $(\ref{pom})$.
\end{teorema}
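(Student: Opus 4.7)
The plan is to assemble the final theorem by stitching together the three propositions that immediately precede it, since each has already done the genuine work. The statement asks for two things: (i) convergence of $\{x^k\}$ and (ii) identification of the limit as a Pareto--Clarke critical point of $F$.

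First I would invoke Proposition \ref{acumulacao1}, which already gives, under assumptions $(H_1)$--$(H_3)$, that the whole sequence $\{x^k\}$ converges to some $\widehat{x}\in E$. That handles the convergence part without any extra calculation. Note that the boundedness assumption $0<\alpha_k<\tilde\alpha$ is not even needed for this step: Fejér convergence to $E$ (Proposition \ref{fejer2}) plus extraction of a cluster point in $E$ via the monotonicity of $\langle F(x^k),z\rangle$ already yields full convergence through Lemma \ref{fejerlim1}(ii).

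Next I would apply Proposition \ref{prop1} to the limit $\widehat{x}$. The hypotheses of that proposition are precisely: $(H_1)$, $(H_2)$, boundedness of $\{\alpha_k\}$ by $\tilde\alpha$, the condition $\lim_{k\to\infty}\|x^{k+1}-x^k\|=0$, and the existence of a cluster point. The first two come from the present theorem's hypotheses; the boundedness of $\alpha_k$ is exactly the extra assumption added here and is why this hypothesis appears in the statement at all (it was not needed for the pure convergence result); the vanishing of consecutive differences comes from Proposition \ref{decrescente1}; and the cluster point is $\widehat{x}$ itself, supplied by the previous step. Proposition \ref{prop1} then concludes that $\widehat{x}$ is a Pareto--Clarke critical point of $(\ref{pom})$, which completes the proof.

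There is no real obstacle here: the hard analytic work (the Fejér inequality, the quasiconvexity argument through Proposition \ref{t11}, and the upper-semicontinuity passage through the Clarke directional derivative in Proposition \ref{limsup}) has all been carried out in the earlier propositions. The only thing to be careful about when writing the final proof is to explain clearly \emph{where} the bound $\alpha_k<\tilde\alpha$ is used, namely only to invoke Proposition \ref{prop1}, and to state explicitly that the convergence of the whole sequence (not merely of a subsequence) to the critical point follows from combining Proposition \ref{acumulacao1} with the identification of the limit provided by Proposition \ref{prop1}.
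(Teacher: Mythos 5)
Your proposal is correct and follows essentially the same route as the paper: its proof also combines Proposition \ref{acumulacao1} (full convergence to a point of $E$), Proposition \ref{decrescente1} (vanishing of consecutive differences), and Proposition \ref{prop1} (identification of the limit as Pareto--Clarke critical, which is where the bound $\alpha_k<\tilde{\alpha}$ enters). Your added remark that the bound on $\alpha_k$ is needed only for the last step is accurate and consistent with how the paper states its intermediate results.
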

\begin{proof} From Proposition \ref{acumulacao1}, $\{x^k\}$ converges, then this sequence has a unique cluster point $\bar x$ and from Proposition \ref{decrescente1} and Proposition \ref{prop1} we obtain the result.
\end{proof}
\begin{corolario}
If $F:\mathbb{R}^n\longrightarrow \mathbb{R}^m $ is $\mathbb{R}^m_{+}$-convex and $\bar{x}$  the point of convergence given by the SPPM algorithm, given by $(\ref{inicio})$ and $(\ref{subdiferencial})$,  then $\bar{x}$ is weak Pareto solution of the problem $(\ref{pom})$.
\end{corolario}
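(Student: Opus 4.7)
The plan is to view this corollary as a straightforward combination of two previously established results, so no new machinery is required; the work reduces to verifying the hypotheses of each result.

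First I would note that $\mathbb{R}^m_+$-convexity of $F$ immediately implies $\mathbb{R}^m_+$-quasiconvexity, since each component $F_i$ is convex and hence quasiconvex. Thus assumption ${\bf (H_2)}$ is automatic, while ${\bf (H_1)}$ and ${\bf (H_3)}$ are already in force because $\bar x$ is assumed to be the point of convergence produced by the SPPM algorithm under the hypotheses of the preceding Theorem. Therefore the preceding Theorem applies and yields that $\bar x$ is a Pareto–Clarke critical point of $F$.

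The second step is to invoke Proposition \ref{Corconvexlips}, which was designed for exactly this situation: any Pareto–Clarke critical point of a locally Lipschitz, $\mathbb{R}^m_+$-convex vector function is a weak Pareto solution of the associated multiobjective problem. Since $F$ satisfies both hypotheses, applying Proposition \ref{Corconvexlips} to $\bar x$ delivers the conclusion that $\bar x \in \mathrm{arg\,min}_w\{F(x):x\in\mathbb{R}^n\}$.

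Because the technical content has already been absorbed into Proposition \ref{Corconvexlips} (via the convex directional-derivative argument using $G'_{i_0}(\bar x,d)\geq 0$ together with a contradiction from strict dominance), there is essentially no obstacle in the present corollary; the only thing to be careful about is to explicitly point out that convexity upgrades ${\bf (H_2)}$ for free and that Proposition \ref{igualdadfc} is used implicitly inside Proposition \ref{Corconvexlips} to identify $F_{i_0}^o(\bar x,d)$ with $F'_{i_0}(\bar x,d)$ in the convex case.
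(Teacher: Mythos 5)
Your proposal is correct and follows exactly the paper's route: the paper's proof is simply ``immediate from Proposition \ref{Corconvexlips}'', which is precisely your second step, with your first step (convexity implies quasiconvexity so ${\bf (H_2)}$ holds and the convergence theorem applies) being the implicit verification the paper leaves to the reader. Your more explicit write-up, including the remark that Proposition \ref{igualdadfc} is what powers Proposition \ref{Corconvexlips} in the convex case, is a faithful unpacking of the same argument.
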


\begin{proof}
It is inmediate from Proposition \ref{Corconvexlips}.
\end{proof}
\begin{corolario}
If $F:\mathbb{R}^n\longrightarrow \mathbb{R}^m $ is continuously differentiable on $\mathbb{R}^n$ and satisfies the assumptions $\bf (H_2),$ $\bf (H_3),$ then the point of convergence given by the SPPM algorithm $\bar{x}$  , given by $(\ref{inicio})$ and $(\ref{subdiferencial})$, is a Pareto critical point of the problem $(\ref{pom}),$ that is, there exists $i_0\in \{1,2,...,m\}$ such that
$$
\left\langle \nabla F_{i_0}(\bar x),d\right\rangle\geq 0, \forall d\in \mathbb{R}^n.  
$$
\end{corolario}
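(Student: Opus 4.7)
The plan is to reduce this corollary to the preceding theorem by observing that continuous differentiability is strictly stronger than being locally Lipschitz, and that for $C^1$ functions the Clarke directional derivative collapses to the ordinary one. Thus almost no new work is required; the statement is essentially a translation of the Pareto-Clarke criticality conclusion into the smooth language of gradients.

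First I would verify that assumption $\bf(H_1)$ is automatic: a continuously differentiable function is locally Lipschitz, and therefore each component $F_i$ is locally Lipschitz, so $F$ satisfies $\bf(H_1)$. Combined with the standing assumptions $\bf(H_2)$ and $\bf(H_3)$, the previous theorem applies and yields that the sequence $\{x^k\}$ generated by the SPPM algorithm converges to a point $\bar{x}$ that is a Pareto-Clarke critical point of $F$. By Definition \ref{paretoclarke}, this means that for every direction $d\in\mathbb{R}^n$ there exists $i_0=i_0(d)\in\{1,\dots,m\}$ with $F_{i_0}^o(\bar{x},d)\geq 0$.

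Next I would invoke the standard fact from nonsmooth analysis that if $F_{i_0}$ is continuously differentiable at $\bar{x}$, then $F_{i_0}$ is strictly differentiable there and the Clarke directional derivative coincides with the classical one, namely
\begin{equation*}
F_{i_0}^o(\bar{x},d)=\langle \nabla F_{i_0}(\bar{x}),d\rangle, \qquad \forall d\in\mathbb{R}^n.
\end{equation*}
(This is essentially Proposition 2.2.4 of Clarke's book, and is also consistent with Proposition \ref{igualdadfc} applied in the smooth setting.) Substituting this identity into the Pareto-Clarke criticality condition yields, for each $d\in\mathbb{R}^n$, the existence of $i_0=i_0(d)\in\{1,\dots,m\}$ with $\langle \nabla F_{i_0}(\bar{x}),d\rangle \geq 0$, which is exactly the Pareto criticality condition claimed.

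The ``main obstacle'' here is really only to make sure the role of $i_0$ is stated correctly: $i_0$ depends on $d$, so the smooth criticality we obtain is the standard one (no common descent direction exists) rather than the much stronger global statement that a single $\nabla F_{i_0}(\bar{x})$ would satisfy $\langle \nabla F_{i_0}(\bar{x}),d\rangle\geq 0$ for all $d$ simultaneously. No computation beyond citing the two facts above is needed, so I would present the proof as a one-line composition of the preceding theorem with the smooth case of the identity $F_i^o(\bar x,\cdot)=\langle \nabla F_i(\bar x),\cdot\rangle$.
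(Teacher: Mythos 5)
Your proof is correct and takes essentially the same route as the paper's: observe that continuous differentiability implies ${\bf (H_1)}$, invoke the convergence theorem to get a Pareto--Clarke critical limit point, and use the identity $F_{i}^o(\bar x,d)=F_i'(\bar x,d)=\left\langle \nabla F_{i}(\bar x),d\right\rangle$ valid for $C^1$ functions. Your closing remark on the quantifier order (that $i_0$ depends on $d$, so the conclusion is the standard ``no common descent direction'' criticality) is a sound clarification of a point the paper's statement writes ambiguously.
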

\begin{proof}
It is immediate since continuously differentiable on $\mathbb{R}^n$ implies the assumption $\bf (H_1),$ and $=F_i^0(x,d)=F_i'(x,d)=\left\langle \nabla F_i(x),d\right\rangle,$ where $F_i'$ is the directional derivative of $F_i.$ 
\end{proof}
\section{Optimization models with quasiconvex multivalued functions}
\label{modelos}
In this section we present some general quasiconvex multiobjective problems where the proposed algorithm may be applied.
\subsection{A quasiconvex model in demand theory}
\noindent

Let \textbf {n} be a finite number of consumer goods. A consumer is an agent who must choose how much to consume of each good.  An ordered set of numbers representing the amounts consumed of each good set is called vector of consumption, and denoted by $ x =  (x_1, x_2, ...,  x_n) $ where $ x_i $ with $ i = 1,2,  ..., n $,  is the quantity consumed  of good \textbf {i}. Denote by $ X $, the feasible set of these vectors  which will be called the  set of consumption,  usually in economic applications we have $ X  \subset \mathbb{R} ^ n_ + $.

In the classical approach of demand theory, the analysis of consumer behavior starts specifying a preference relation over the set $X,$ denoted by $\succeq$. The notation: $  "x \succeq  y "  $ means that "$ x $ is at least as good as  $ y $" or "$  y $ is not preferred to $x$". This preference relation  $ \succeq  $ is assumed rational, i.e,  is complete because the consumer is able to order all possible combinations of goods, and  transitive, because consumer preferences are consistent, which means if the consumer prefers $\bar{x}$ to $\bar{y} $ and $\bar{y}$ to $\bar{z}$,  then he prefers $\bar{x}$ to $\bar{z} $ (see Definition  3.B.1  of Mas-Colell et al. \cite{Colell}). 

A function  $ \mu:X \longrightarrow \mathbb {R} $ is said to be an utility function representing a preference relation $\succeq $ on $X$, if the following condition is satisfied:
\begin{equation}
x \succeq y , \textrm{if and only if,}\  \mu(x) \geq \mu(y)
\label{prefe}
\end{equation}
for all $x, y \in X$.

The utility function is a way to represent preferences between  two vectors of consumption. If they have  the same value of the utility function, then the consumer is indifferent. Moreover, if we have  several preferences relations $\succeq_i, i=1,2,...,m,$  (multiple  criteria), which satisfy the condition $(\ref{prefe})$, then we have a utility function $\mu_i$ for each one of these preferences $\succeq_i$.

Observe that the utility function not always exist. In fact, define in $  X = \mathbb {R} ^ 2 $  a lexicographic relation,  given by: for $ x, y \in \mathbb {R}^2 $, $ x \succeq y $ if and only if $ "x_1 > y_1"$ or $"x_1 = y_1 \ \textnormal {e} \ x_2 \geq y_2 "$. Fortunately, a very general class of preference relations can be represented by utility functions, see for example 3.C.1 Proposition of Mas-Colell et al. \cite {Colell}.

If a preference relation $ \succeq $  is represented by a utility function $ \mu $, then the problem of maximizer the consumer preference on  $X$ is equivalent to solve the optimization problem
\begin{eqnarray*}
\textnormal{(P)\ max}\{ \mu(x) :x \in X\}.
\end{eqnarray*}

Now consider a multiple criteria, that is, consider  $ m $ preference relations denoted by $\succeq_i, i=1,2,...,m.$ Suppose that for each preference $\succeq_i,$  there exists an utility  function, $ \mu_i,$ respectively, then the problem of maximizer the consumer preference on  $ X $ is equivalent to solve the multiobjective optimization problem
\begin{eqnarray*}
\textnormal{(P')\ max}\{ (\mu_{1}(x), \mu_{2}(x), ..., \mu_{m}(x)) \in \mathbb{R}^m :x \in X\}.
\end{eqnarray*}

Since there is not a single point which maximize all the functions simultaneously the concept of optimality is established in terms of Pareto optimality or efficiency.

On the other hand, a natural psychological assumption in economy is that the consumer tends to diversify his consumption among all goods, that is, the preference $ \succeq $  satisfies the following  convexity property: $ X$ is convex and if $x \succeq  z $ and $ y \succeq z $ then $ \lambda  x + (1 - \lambda) y \succeq  z $, $ \forall  \lambda \in [0,1] $. 

It can be proved that  if there is a utility function representing the preference relation $\succeq,$ then the convexity property of $ \succeq$ is equivalent to the quasiconcavity of the utility function $ \mu $.  Therefore ${\rm(P')} $ becomes a maximization problem with quasiconcave multiobjective function, since each component function is quasiconcave. 

Taking F = $ (- \mu_1, - \mu_2, ..., - \mu_m) $, we obtain a minimization problem with quasiconvex multiobjective function, since each  component function is quasiconvex one.

There are various class of utilities functions which are frequently used  to generate demand functions.  One of the most common  is the Cobb-Douglas utility function,  which is defined on $ \mathbb{R} ^ 2$  by $ \mu(x_1, x_2) = k x_1 ^\alpha x_2 ^ {\beta} $, with $ \alpha,\beta>0$ and $ k > 0 $. Another utility function CES (Constant Elasticity of Substitution), defined on $ \mathbb {R} ^ 2$ by $ \mu (x_1, x_2) = (\lambda_1x_1 ^ \rho + \lambda_2x_2 ^ \rho) ^ {1 / \rho}$, where $ \lambda_1, \lambda_2 \geq 0$, $ \lambda_1 + \lambda_2 = 1 $, and $ \rho $ is a constant.

\subsection{A quasiconvex model in location theory}
\noindent

Location problems are related to determining the location for one or more facilities, considering a given set of demand points, with which interactions should be established. These terms are not part of a standard terminology, are sometimes replaced by: clients, existing facilities,  businesses or users.

The following problem of locating a facility is motivated from the Chapter IV of Gromicho, \cite{Gromicho}. For each $ i = 1 , ... , m,$ let the cluster set $ d^i = \{ d ^{i}_1 , d^{i}_2,..., d ^{i}_{p(i)} \} \subset \mathbb { R} ^n $ , $ n \geq $ 2 (there exist $m$ cluster). We need to find a location $ x \in \mathbb {R} ^ n $ for an installation so that this location minimizes some real function involving the distance between the new location and each cluster set of demand points.

For each $ i = 1 , ... , m $ , if $ C ^ i_j $ , $ j = 1 , ... , p(i),$ are compact convex sets with $ 0 \in \textnormal {int } ( C ^ i_j ) $  and $ \textnormal { int } ( C ^ i_j ) $ denotes the interior of $ C ^ i_j $ then, for each $ i = 1 , ... , m,$ we define the distance between $ x $ and $ d ^ i_j$ by $ \gamma_{C ^{i}_j} ( x - d ^ i_j ) $ with $ \gamma_ {C^ i_j}$ the gauge or  Minkowsky functional of the set $C^i_j$, i.e. $ \gamma_ {C^i_j}(x) = \textnormal {inf} \{ t > 0: x \in tC ^i_j  \}$ . Note that if $ { C ^ i_j } $ is the unit ball in $ \mathbb {R} ^ n,$ then $ \gamma_ { C ^ i_j } ( x ) $ is the Euclidean distance from $ x $ to $ 0 $.

To introduce the model, consider, for each $ i = 1,  ..., m $, the function  $\gamma_i:  \mathbb{R} ^ n \longrightarrow \mathbb {R}^p_+$,  given by $ \gamma_i (x) = (\gamma_{C^ i_1}(x -  d ^i_1), ..., 
\gamma_{C ^i_{p(i)}} (x  - d^i_{p(i)})) $. And suppose, for each $i,$ that the functions $ f^i_j: \mathbb {R} ^{p(i)}_ + \longrightarrow 
\mathbb {R}_ +$, with $ j  = 1, ..., p(i) $ is nondecreasing  in $ \mathbb {R}^ {p(i)}_ + $,  that is, if $ x, y \in \mathbb{R} ^{p(i)}_+$, satisfying for each $ j = 1, ..., p(i)$, $ x_j \leq y_j $,  then $f^i_j (x) \leq f^i_j (y) $.

The localization model is given by
\begin {center}
$min \{(\phi_1 (x), \phi_2 (x), ..., \phi_m (x)): x \in \mathbb {R} ^ n \} $,
\end {center}
where, for each $ i = 1, ..., m$,  $\phi_i (x) = max_{1  \leq j \leq p(i)} f^i_j (\gamma_i (x)) $. If for each $i=1,...,m,$ the functions $ f_j^i:  \mathbb {R} ^{p(i)}_+ \longrightarrow \mathbb {R} ^ + $  are quasiconvex in $ \mathbb {R}  ^{p(i)}_ + $, then it can proved that for every $ i = 1,  ..., m $, each function  $ \phi_i (.) $ is quasiconvex  in $ \mathbb {R} ^ n $.


\section{Conclusion and future works}
\noindent

In this paper we introduced a scalarization proximal point method to solve unconstrained (possibly nonconvex and non-differentiable) multiobjective minimization problems with locally Lipschitz functions. Then, for quasiconvex objective functions we show a strong convergence (global convergence)  to a Pareto-Clarke critical point satisfying the completeness assumption
$\bf (H_3)$. Note this assumption has been considered in the convergence analysis of the proximal point method for the convex case, see \cite{Bello}.

We also present, in Section \ref{modelos}, two optimization models where the quasiconvexity of the multiobjective functions appear naturally. We present quasiconvex models in demand theory and location theory.  

The (SPPM) algorithm, introduced in this paper, is the first attempt to construct efficient proximal point methods to solve quasiconvex multiobjective minimization problems and in its actual version may be considered as a based algorithm to develop other methods that consider computational errors, lower computational costs, lower complexity order and improves the convergence rate. Observe that in this paper we do not present an inexact version because, according to our knowledge,  the theory of $\epsilon-$ subdifferencial  Clarke has not yet been developed.

To reduce considerably the computational cost in each iteration of the (SPPM) algorithm it is need to consider the unconstrained iteration
\begin{equation}
\label{subdiferencialint}
 0 \in \partial^o\left( \left\langle F(.), z_k\right\rangle  + \dfrac{\alpha_k}{2}\left\langle e_k , z_k\right\rangle \Vert\ .\  - x^k \Vert ^2 \right) (x^{k+1})
\end{equation}
which is more practical that (\ref{subdiferencial}). One natural condition to obtain (\ref{subdiferencialint}) is that $x ^{k +1} \in (\Omega_k)^0$ (interior of $\Omega_k$). So we believe that a variant of the (SPPM) algorithm may be an interior variable metric proximal point method.

Observe also that in practice the iteration (\ref{subdiferencial}) or (\ref{subdiferencialint}) should be solve using a local algorithm, which only provides an approximate solution. Therefore, we consider that in a future work it is important to analyze the convergence of the proposed algorithm considering now inexact iterations, see \cite{PapaLenninOliveira}.  Also the introduction of bundle methods are welcome.\\



{\noindent}{\bf Acknowledgements}\\
The research
of H.C.F.Apolinário was partially supported by CAPES/Brazil. The research of P.R.Oliveira was
partially supported by CNPQ/Brazil. The research of E.A.Papa Quiroz was partially supported by
the Postdoctoral Scholarship CAPES-FAPERJ Edital PAPD-2011.



\end{document}